\newtheorem{theorem}{Theorem}[section]
\newtheorem{lemma}[theorem]{Lemma}
\newtheorem{assumption}[theorem]{Assumption}
\newcommand{\tild}{\widetilde}
\newcommand{\eps}{\varepsilon}
\newcommand{\ol}{\overline}
\newcommand{\onevec}{\mathbf{1}_n}
\newcommand{\cset}{\mathcal{C}}
\newcommand{\numberthis}{\addtocounter{equation}{1}\tag{\theequation}}
\DeclareMathOperator*{\argmin}{arg\,min}
\newcommand{\R}{\mathbb{R}}
\newcommand{\Z}{\mathbb{Z}}
\newcommand{\E}{\mathbb{E}}
\newcommand{\mU}{{\bf u}}
\newcommand{\mV}{{\bf v}}
\newcommand{\mW}{{\bf W}}
\newcommand{\mX}{{\bf x}}
\newcommand{\mY}{{\bf y}}
\newcommand{\aX}{\overline{\bf x}}
\newcommand{\aU}{\overline{\bf u}}
\newcommand{\aV}{\overline{\bf v}}
\newcommand{\cC}{{\mathcal{C}}}
\newcommand{\cD}{{\mathcal{D}}}
\newcommand{\bbf}{{\bf f}}
\newcommand{\bx}{{\bf x}}
\newcommand{\by}{{\bf y}}
\newcommand{\Lav}{L_{g}}
\newcommand{\Lmax}{L_{l}}
\newcommand{\Lworst}{L_{\xi}}
\newcommand{\muav}{\mu_{g}}
\newcommand{\mumin}{\mu_{l}}
\newcommand{\sigmaav}{\sigma_{g}}
\newcommand{\ds}{\displaystyle}
\newcommand{\norm}[1]{\left\| #1 \right\|}
\newcommand{\angles}[1]{\left\langle #1 \right\rangle}
\newcommand{\cbraces}[1]{\left( #1 \right)}
\newcommand{\sbraces}[1]{\left[ #1 \right]}
\newcommand{\braces}[1]{\left\{ #1 \right\}}
\newcommand{\pd}[1]{\textcolor{black}{#1}}
\newcommand{\rev}[1]{\textcolor{black}{#1}}
\title{\LARGE \bf
	An Accelerated Method For Decentralized Distributed Stochastic Optimization Over Time-Varying Graphs
}
\author{Alexander Rogozin, Mikhail Bochko, Pavel Dvurechensky, Alexander Gasnikov, Vladislav Lukoshkin
	\thanks{
		A.R. and M.B. are with the Moscow Institute of Physics and Technology and HSE University, 
		Russian Federation 
		{\tt\small ([aleksandr.rogozin,bochko.mg]@phystech.edu)}. 
		P.D. is with the Weierstrass Institute for Applied Analysis and Stochastics, 
		Germany,
		Institute for Information Transmission Problems RAS, 
		HSE University, Russian Federation
		{(\tt\small pavel.dvurechensky@wias- berlin.de)}.
		A.G. is with  Moscow Institute of Physics and Technology, 
		Institute for Information Transmission Problems RAS, 
		HSE University, Russian Federation {(\tt\small gasnikov@yandex.ru)}.
		V.L. is with the Skolkovo Institute of Science and Technology,
		Russia {(\tt\small lukoshkin@phystech.edu)}. 
		The research was supported by the Ministry of Science and Higher Education of the Russian Federation (Goszadaniye) No. 075-00337-20-03, project No. 0714-2020-0005.
	}%
}
\begin{document}
	
	\maketitle
	\thispagestyle{empty}
	\pagestyle{empty}

	\begin{abstract}
		We consider a distributed stochastic optimization problem that is solved by a decentralized network of agents with only local communication between neighboring agents.
		The goal of the whole system is to minimize a global objective function given as a sum of local objectives held by each agent. Each local objective is defined as an expectation of a convex smooth random function and the agent is allowed to sample stochastic gradients for this function. For this setting we propose \pd{the first} accelerated (in the sense of Nesterov's acceleration) method that simultaneously attains optimal up to a logarithmic factor communication and oracle complexity bounds for smooth strongly convex distributed stochastic optimization. We also consider the case when the communication graph is allowed to vary with time and obtain complexity bounds for our algorithm, which are the first upper complexity bounds for this setting in the literature.
	\end{abstract}
	\begin{keywords}
		stochastic optimization, decentralized distributed optimization, time-varying network
	\end{keywords}

	\section{Introduction}
	Distributed algorithms have already about half a century history \cite{bor82,tsi84,deg74} with many applications including robotics, resource allocation, power system control, control of drone or satellite networks, distributed statistical inference and optimal transport, multiagent reinforcement learning \cite{xia06,rab04,ram2009distributed,kra13,ned17e,nedic2017fast,uribe2018distributed,kroshnin2019complexity,ivanova2020composite}.
	Recently, development of such algorithms has become one of the main topics in optimization and machine learning motivated by large-scale learning problems with privacy constraints and other challenges such as data being produced or stored distributedly \cite{bot10,boy11,aba16,ned16w,ned15}. An important part of this research studies decentralized distributed optimization algorithms over arbitrary networks. In this setting a network of computing agents, e.g. sensors or computers, is represented by a connected graph in which two agents can communicate with each other if there is an edge between them. This imposes communication constraints and the goal of the whole system~\cite{ned09,ram10,daneshmand2021newton} is to cooperatively minimize a global objective using only local communications between agents, each of which has access only to a local piece of the global objective. Due to random nature of the optimized process or randomness and noise in the used data, a particular important setting is distributed stochastic optimization. Moreover, the topology of the network can vary in time, which may prevent fast convergence of an algorithm.  
	
	More precisely, we consider the following optimization problem
	\begin{align}\label{eq:initial_problem}
	\min_{x\in\R^d} \left[f(x) := \frac{1}{n}\sum_{i=1}^n f_i(x)\right],   \quad f_i(x):=\E_{\xi_i\sim\cD_i} \bbf_i(x, \xi_i),
	\end{align}
	where $\xi_i$'s are random variables with probability distributions $\cD_i$. For each $i=1,...,n$ we make the following assumptions: $f_i(x)$ is a convex function and that almost sure w.r.t. distribution $\cD_i$, the function $\bbf_i(x, \xi_i)$ has gradient $\nabla \bbf_i(x, \xi_i)$, which is $L_i(\rev{\xi_i})$-Lipschitz continuous with respect to the Euclidean norm. 
	Further, for each $i=1,...,n$, we assume that we know a constant $L_i\geqslant 0$ such that $\sqrt{\E_{\rev{\xi_i}} L_i(\rev{\xi_i})^2 } \leq L_i < +\infty$. Under these assumptions, $\E_{\rev{\xi_i}}\nabla \bbf_i(x, \xi_i) = \nabla f_i(x)$ and $f$  is $L_i$-smooth, i.e. has $L_i$-Lipschitz continuous gradient with respect to the Euclidean norm. Also, we assume that, for all $x$, and $i$
	\begin{equation}
	\label{stoch_assumption_on_variance}
	\E_{\rev{\xi_i}}[\norm{\nabla \bbf_i(x, \xi_i) - \nabla f_i(x)}^2] \leqslant \sigma_i^2,
	\end{equation}
	where $\norm{\cdot}$ is the Euclidean norm. Finally, we assume that each $f_i$ is $\mu_i$-strongly convex \rev{($\mu_i > 0$)}. Important characteristics of the objective in \eqref{eq:initial_problem} are local strong convexity parameter $\ds \mumin = \min_i\mu_i$ and local smoothness constant $\Lmax = \max_i L_i$, which define local condition number $\kappa_l=\Lmax/\mumin$, as well as their global counterparts $\muav = \frac{1}{n}\sum_{i=1}^n\mu_i,~ \Lav = \frac{1}{n}\sum_{i=1}^n L_i$, $\kappa_g=\Lav /\muav$.
	The global condition number may be significantly better than local (see e.g. \cite{scaman2017optimal} for details) and it is desired to develop algorithms with complexity depending on the global condition number. Moreover, we introduce a worst-case smoothness constant over stochastic realizations $\ds \Lworst = \max_i \max_{\xi} L_i(\xi)$ and a maximum gradient norm at optimum $\ds M_\xi = \max_i \max_{\xi} \norm{\nabla f_i(x^*, \xi)}$ and assume that these constants are well-defined \rev{(finite)}. Similarly to global smoothness and strong convexity constants, we introduce $\sigmaav^2 = \frac{1}{n}\sum_{i=1}^n \sigma_i^2$.

	To introduce the distributed optimization setup, we assume that \rev{communication constraints in the computational network are represented by an undirected communication graph which may vary with time. Namely, the network is modeled with a sequence of graphs $\braces{\mathcal{G}^k = (V, E^k)}_{k=0}^\infty$. We note that the set of vertices remains the same, while set of edges is allowed to change with time.} Each agent in the network corresponds to a graph vertex and communication \rev{at time slot $k$} is possible only between nodes which are connected by an edge \rev{at this time slot}. Further, each agent $i$ has access only to iid samples from $\cD_i$ and corresponding stochastic gradients $\nabla \bbf_i(x, \xi_i)$. The goal of the whole network is to solve the minimization problem \eqref{eq:initial_problem} by using only communication between neighboring nodes. The performance of decentralized optimization algorithms depend\rev{s} on the characteristic number $\chi$ of the network that quantifies its connectivity and how fast the information is spread over the network. The precise definition will be given later.

	\subsection{Related work}
	In centralized setting optimal methods exist \cite{ghadimi2013optimal} for the considered setting of smooth strongly convex stochastic optimization, as well as many algorithms for other settings \cite{dvurechensky2018parallel,Zhang2018,dvurechensky2021hyperfast,agafonov2021accelerated}.
	Decentralized distributed optimization introduces several challenges, one of them being that one has to care about two complexities: number of oracle calls which are made by each agent and the number of communication steps, which are sufficient to reach a given accuracy $\eps$. In the simple case of all constants $\mu_i,L_i,\sigma_i$ being independent on $i$, the oracle complexity lower bound \cite{scaman2017optimal,dvinskikh2019decentralized}
	$\Omega\left(\max\left\{\frac{\sigma^2}{n\mu\eps},\sqrt{\frac{L}{\mu}}\ln\frac{1}{\eps}\right\}\right)$ is a clear counterpart of the \rev{centralized} lower bound \cite{Nemirovskii1983}. The lower bound on communications number $\Omega\left(\sqrt{\frac{L}{\mu}\chi}\ln\frac{1}{\eps} \right)$ corresponds to decentralized deterministic optimization and, compared to standard non-distributed accelerated methods \cite{nesterov1983method,nesterov2020primal-dual,guminov2019accelerated}, has an additional network-dependent factor $\sqrt{\chi}$. Existing distributed algorithms  \cite{fallah2019robust,dvinskikh2019primal,dvinskikh2019decentralized,olshevsky2019asymptotic,olshevsky2019non} achieve either the lower oracle complexity bound or the lower communication complexity bound, but not both simultaneously. In this paper we propose an algorithm which closes this gap and achieves both bounds simultaneously.  
	
	Deterministic decentralized \rev{optimization} is quite well understood with many centralized algorithms having their decentralized counterparts. For example, there are decentralized subgradient method \cite{Nedic2009}, gradient methods \cite{shi2015extra,rogozin2019projected} and many variants of accelerated gradient methods \cite{Qu2017,ye2020multi,li2018sharp,Jakovetic,dvu2018,dvinskikh2019decentralized,li2020revisiting}, which achieve both communication and \rev{oracle} complexity lower bounds \cite{hendrikx2020optimal,li2020optimal,scaman2017optimal,tang2019practicality}. 
	The negative side of the majority of the accelerated distributed methods is that their complexity depends on the local condition number $\kappa_l$, which may be larger than the global condition number $\kappa_g$, which corresponds to the centralized optimization. A number of methods \cite{scaman2017optimal,Wu2017,Zhang2017,dvu2018,uribe2020dual} require an assumption that the Fenchel conjugate for each $f_i(x)$ is available, which may be restrictive in practice.
	In contrast, our complexity bounds depend on the global condition number and we use the primal approach without additional assumptions on $f_i$'s.
	
	Another important part of our paper is decentralized distributed optimization on time-varying networks. This area is not understood as well as the simpler setting of fixed networks. The first method with provable geometric convergence was proposed in \cite{Nedic2017achieving}. Such primal algorithms as Push-Pull Gradient Method \cite{Pu2018} and DIGing \cite{Nedic2017achieving} are robust to network changes and have theoretical guarantees of convergence over time-varying graphs. Recently, a dual method for time-varying architectures was introduced in \cite{Maros2018}. All these methods do not allow to achieve the above lower bounds. \pd{Moreover, we are not aware of any accelerated algorithms for stochastic optimization on time-varying networks.}

	\subsection{Our contributions}
	\pd{
		The main contribution of this paper is twofold. Firstly, when the communication network is fixed, we propose the first optimal up to logarithmic factors accelerated decentralized distributed algorithm for stochastic convex optimization. This means that our algorithm has
		oracle per node complexity $\widetilde{O}\left(\max\left\{\frac{\sigma_g^2}{n\mu_g\eps},\sqrt{\frac{L_g}{\mu_g}}\ln\frac{1}{\eps}\right\}\right)$ and communication complexity $\widetilde{O}\left(\sqrt{\frac{L_g}{\mu_g}\chi}\ln\frac{1}{\eps} \right)$.
		Importantly, our communication bound depends on global constants $L_g,\mu_g$ whereas existing algorithms, even for deterministic setting, provide bounds which depend on local constants $L_l$, $\mu_l$ that can be much worse than $L_g$, $\mu_g$.
	}
	
	\pd{
		Secondly, we propose the first accelerated distributed stochastic optimization algorithm over time-varying graphs. This algorithm has the same oracle per node complexity as the above algorithm and the communication complexity $\widetilde{O}\left(\frac{\tau}{\lambda}\sqrt{\frac{L_g}{\mu_g}}\ln\frac{1}{\eps} \right)$, where $\tau$ and $\lambda$ characterize the dynamics of the communication graph (see the precise definition in Assumption \ref{assum:mixing_matrix}).
	}

	\section{Preliminaries}
	
	\subsection{Problem reformulation}
	
	In order to solve problem \eqref{eq:initial_problem} in a decentralized manner, we assign a local copy of $x$ to each node in the network, which leads to a linearly constrained problem
	\begin{align}\label{eq:problem_inflated}
	\min_{\bx\in\R^{n\times d}}~ F(\bx) = \sum_{i=1}^m f_i(x_i)~ \text{s.t.}~ x_1 = \ldots = x_n,
	\end{align}
	where $\bx = (x_1\ldots x_n)^\top\in\R^{n\times d}$. We denote the feasible set $\cC = \braces{x_1 = \ldots = x_n}$ for brevity. Strong convexity and smoothness parameters of $F$ are related to that of functions $f_i$. Namely, $F$ is $\Lmax$-smooth and $\mumin$-strongly convex on $\R^{n\times d}$ and $\Lav$-smooth and $\muav$-strongly convex on the set $\cC$.

	\subsection{Consensus procedure}
	
	In this subsection we discuss, how the agents can interact by exchanging information. Importantly, the communication  graph $\mathcal{G}$ can change with time. Thus, we consider a sequence of undirected communication graphs $\braces{\mathcal{G}^k = (V, E^k)}_{k=0}^\infty$ and a sequence of corresponding mixing matrices $\braces{\mW^k}_{k=0}^\infty$ associated with it. We impose the following 
	\begin{assumption}\label{assum:mixing_matrix}
		Mixing matrix sequence $\braces{\mW^k}_{k=0}^\infty$ satisfies the following properties.
		\begin{itemize}
			\item (Decentralized property) $(i, j)\notin E_k \;\Rightarrow \;[\mW^k]_{ij} = 0$.
			\item (Double stochasticity) $\mW^k \onevec = \onevec,~ \onevec^\top\mW^k = \onevec^\top$.
			\item (Contraction property) There exist $\tau\in\Z_{++}$ and $\lambda\in(0, 1)$ such that for every $k\ge \tau - 1$ it holds
			\begin{align*}
			\norm{\mW_{\tau}^k \mX - \aX} \le (1 - \lambda)\norm{\mX - \aX}, 
			\end{align*}
			where $\mW_\tau^k = \mW^k \ldots \mW^{k-\tau+1}$.
		\end{itemize}
	\end{assumption}
	
	The contraction property in Assumption \ref{assum:mixing_matrix} was initially proposed in \cite{koloskova2020unified} in a stochastic form. This property generalizes several assumptions in the literature.
	\begin{itemize}
		\item Time-static connected graphs. In this scenario we have $\mW^k = \mW$. Therefore, $\lambda = 1 - \lambda_2(\mW)$, where $\lambda_2(\mW)$ denotes the second largest eigenvalue of $\mW$.
		\item Sequence of connected graphs: every $\mathcal{G}_k$ is connected. In this case $\lambda = 1 - \underset{k\ge 0}{\sup}~\lambda_2(\mW^k)$.
		\item $\tau$-connected graph sequence: for every $k\ge 0$ graph $\mathcal{G}^k_\tau = (V, E^k\cup E^{k+1}\cup\ldots\cup E^{k+\tau-1})$ is connected \cite{Nedic2017achieving}. For $\tau$-connected graph sequences it holds $1 - \lambda = \underset{k\ge 0}{\sup}~\lambda_{\max}(\mW_\tau^k - \frac{1}{n}\onevec\onevec^\top)$.
	\end{itemize}

	During every (synchronized) communication round, the agents pull information from their neighbors and update their local vectors according to the rule
	\begin{align*}
	x_i^{k+1} = [\mW^k]_{ii} \rev{x_i^k} + \sum_{(i, j)\in E^k} [\mW^k]_{ij} x_j^k,
	\end{align*}
	which writes as $\mX^{k+1} = \mW^k \mX^k$ in matrix form. The contraction property in Assumption \ref{assum:mixing_matrix} requires a specific choice of weights in $\mW^k$. Choosing Metropolis weighs is sufficient to ensure the contraction property for $\tau$-connected graph sequences (see \cite{Nedic2017achieving} for details):
	\begin{align*}
	[\mW^k]_{ij} = 
	\begin{cases}
	1 / (1 + \max\{d^k_i, d^k_j\}) &\text{if }(i, j)\in E^k, \\
	0 &\text{if } (i, j)\notin E^k, \\
	1 - \ds\sum_{(i, m)\in E^k} [\mW^k]_{im} &\text{if } i = j, 
	\end{cases}
	\end{align*}
	where $d^k_i$ denotes the degree of node $i$ in graph $\mathcal{G}^k$.
	
	\pd{When the communication graph $\mathcal{G}$ does not change with time, it is possible to apply accelerated consensus procedures by leveraging Chebyshev acceleration~\cite{auzinger2011iterative,scaman2018optimal}: given the reference matrix $\mW$ as above, set $W_T=P_T(\mW)$ and $P_T(1)=1$ (the latter is to ensure the double stochasticity of $W_T$), with $T$ being the number of consensus steps and $P_T$ being the Chebyshev polynomial of degree $T$. In this case one can guarantee that
		\begin{align*}
		\norm{W_T \mX - \aX} \le (1 - \sqrt{1-\rho})^T\norm{\mX - \aX}, 
		\end{align*}
		where $\rho :=  \lambda_2(\mW) < 1$. In this case we define $\chi=\frac{1}{1-\rho}$.
	}

	\section{Algorithm and main result}
	In this section we describe the proposed algorithm and give its convergence theorem. Our algorithms is an accelerated mini-batch stochastic gradient method equipped with a consensus procedure. Let $\braces{\xi_i^\ell}_{\ell=1}^r$ be independent random variables with distribution $\cD_i$. For function $\bbf_i$ we define its batched gradient of size $r$ as
	\begin{align*}
	\nabla^r \bbf\cbraces{x, \braces{\xi_i^\ell}_{\ell=1}^r} = \frac{1}{r} \sum_{\ell=1}^r \nabla \bbf_i(x, \xi_\ell^r).
	\end{align*}
	Batched gradient for $F(\bx)$ is defined analogously. Let $\braces{\xi^\ell}_{\ell=1}^r$ be independent, where $\xi^\ell = (\xi_1^\ell \ldots \xi_n^\ell)^\top$ is a random vector consisting of random variables at all nodes. Then we define $\nabla^r F(\bx, \braces{\xi}_{\ell=1}^r)$ as a matrix of $\R^{n\times d}$, the $i$-th row of which stores $\nabla^r \bbf_i\cbraces{x_i, \braces{\xi_i^\ell}_{\ell=1}^r}$.
	For brevity we use notation $\nabla^r \bbf_i(x_i),~ \nabla^r F(\bx)$ for batched gradients of $\bbf_i$ and $F$, respectively.
	
	To describe the algorithm we introduce sequences of extrapolation coefficients $\alpha^k, A^k$ similar to that of \cite{stonyakin2020inexact}, which are defined as follows.
	\begin{subequations}
		\begin{align}\label{eq:Ak_def}
		\alpha^0 &= A^0 = 0 \\
		(A^k + \alpha^{k+1})(1 + A^k\muav/2) &= 2\Lav(\alpha^{k+1})^2, \\
		A^{k+1} &= A^k + \alpha^{k+1}.
		\end{align}    
	\end{subequations}


	\begin{algorithm}[H]
		\caption{Decentralized Stochastic AGD}
		\label{alg:decentralized_agd}
		\begin{algorithmic}[1]
			\REQUIRE{Initial guess $\mX^0\in \cset$, constants $\Lav, \muav > 0$, \\$\mU^0 = \mX^0$}
			\FOR{$k = 0, 1, 2,\ldots$}
			\STATE{\hspace{-0.25cm}$\mY^{k+1} = \frac{\alpha^{k+1} \mU^k + A^k \mX^k}{A^{k+1}}$}
			\STATE{\vspace{0.05cm}\hspace{-0.25cm}\label{alg_step:agd_step}$\mV^{k+1} = \frac{(\alpha^{k+1}\muav/2)\mY^{k+1} + (1 + A^k\muav/2)\mU^k}{1 + A^{k + 1}\muav/2} - \frac{\alpha^{k+1}\nabla^r  F(\mY^{k+1})}{1 + A^{k + 1}\muav/2}$}
			\STATE{\label{alg_step:consensus_update}
				$
				\hspace{-0.25cm}
				\mU^{k+1} = \text{Consensus}(\mV^{k+1}, T^k)
				$
			}
			\STATE{\hspace{-0.25cm}$\mX^{k+1} = \frac{\alpha^{k+1} \mU^{k+1} + A^k \mX^k}{A^{k+1}}$}
			\ENDFOR
		\end{algorithmic}
	\end{algorithm}
	\vspace{-0.5cm}
	\begin{algorithm}[H]
		\caption{Consensus}
		\label{alg:consensus}
		\begin{algorithmic}
			\REQUIRE{Initial $\mX^0\in\cset$, number of iterations $T$.}
			\FOR{$t = 1, \ldots, T$}
			\STATE{$\mX^{t+1} = \mW^t \mX^t$}
			\ENDFOR
		\end{algorithmic}
	\end{algorithm}

	In the next theorem, we provide oracle and communication complexities of Algorithm \ref{alg:decentralized_agd}, i.e. we estimate the number of stochastic oracle calls by each node and the number of communication rounds to solve problem \eqref{eq:initial_problem} with accuracy $\eps$.
	\begin{theorem}[Main result]\label{th:total_iterations_strongly_convex}
		\pd{Let $\eps > 0$ be the desired accuracy. Set}
		\begin{align*}
		T_k = T = \frac{\tau}{2\lambda}\log\frac{D}{\delta'},~ \delta' = \frac{n\eps}{32} \frac{\muav^{3/2}}{\Lav^{1/2} \Lmax^2},~
		r = \frac{2\sigma_{\pd{g}}^2}{\eps\sqrt{\Lav\muav}},
		\end{align*}
		where
		\begin{align}\label{eq:def_sqrt_D}
		&\sqrt D = \cbraces{\frac{2\Lmax}{\sqrt{\Lav\muav}} + 1}\sqrt{\delta'} + \frac{2nM_\xi}{\sqrt{\Lav\muav}} \\
		&+\frac{2\Lmax}{\muav} \sqrt{n} \cbraces{\norm{\ol u^0 - x^*}^2 + \frac{2}{\sqrt{\Lav\muav}}\cbraces{\frac{\sigmaav^2}{4n\Lav r^2} + \delta}}^{1/2} \notag.
		\end{align}
		\pd{Then, to yield $\mX^N$ such that
			\begin{align*}
			&\E f(\ol x^N) - f(x^*)\le \eps,~ \E\norm{\mX^N - \aX^N}^2\le \delta'=O(\eps),
			\end{align*}
			Algorithm \ref{alg:decentralized_agd} requires \pd{no more than}
			\begin{align}\label{eq:agd_computational_complexity}
			N_{\pd{orcl}} = N\cdot r = \dfrac{6\sigmaav^2}{n\muav\eps} \log\cbraces{\frac{4\Lav\norm{\ol u^0 - x^*}^2}{\eps}}
			\end{align}
			stochastic oracle calls at each node and \pd{no more than}
			\begin{align}\label{eq:agd_communication_complexity}
			N_{\text{comm}} 
			&= 3\sqrt{\frac{\Lav}{\muav}} \kappa \cdot \log\cbraces{\frac{4\Lav\norm{\ol u^0 - x^*}^2}{\eps}} \log\frac{D}{\delta'},
			\end{align}
			\rev{communication rounds}, where $\kappa= \frac{\tau}{2\lambda} $ under Assumption \ref{assum:mixing_matrix} and  $\kappa=\sqrt{\chi}$ 
			when the communication graph is static.}
	\end{theorem}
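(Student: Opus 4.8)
The plan is to recognize Algorithm~\ref{alg:decentralized_agd} as an instance of the inexact accelerated gradient method of \cite{stonyakin2020inexact} applied to the consensus-constrained problem~\eqref{eq:problem_inflated}, equipped with the Euclidean geometry of $\R^{n\times d}$ under which (as recorded in the Preliminaries) $F$ is $\Lav$-smooth and $\muav$-strongly convex along $\cset$, and then to control separately the three sources of inexactness: (i) the stochastic mini-batch gradient $\nabla^r F(\mY^{k+1})$ in place of $\nabla F(\mY^{k+1})$; (ii) the fact that $\mY^{k+1}$ and $\mV^{k+1}$ need not lie exactly in $\cset$, so the full-space gradient of $F$ is used rather than its restriction to $\cset$; and (iii) the replacement of the exact Euclidean projection onto $\cset$ (column averaging) in the ``prox'' step by $T$ rounds of the Consensus routine.

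First I would record two elementary bounds. Since the rows $\nabla^r\bbf_i(x_i)-\nabla f_i(x_i)$ are independent, zero-mean, with second moment at most $\sigma_i^2/r$, one gets $\E\norm{\nabla^r F(\bx)-\nabla F(\bx)}^2\le n\sigmaav^2/r$, which the prescribed batch size $r$ makes small enough for the accumulated stochastic error in the convergence bound to be $O(\eps)$. For consensus, double stochasticity of each $\mW^k$ gives $\aU^{k+1}=\aV^{k+1}$, and the contraction property of Assumption~\ref{assum:mixing_matrix} yields $\norm{\mU^{k+1}-\aV^{k+1}}\le(1-\lambda)^{\floor{T/\tau}}\norm{\mV^{k+1}-\aV^{k+1}}$; using $(1-\lambda)^{1/\lambda}\le e^{-1}$ and $T=\tfrac{\tau}{2\lambda}\log(D/\delta')$ this is at most $\sqrt{\delta'/D}\,\norm{\mV^{k+1}-\aV^{k+1}}$. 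In the static-graph case the same estimate holds with $(1-\lambda)^{\floor{T/\tau}}$ replaced by $(1-\sqrt{1-\rho})^{T}$, so that $T=O(\sqrt\chi\,\log(D/\delta'))$ suffices and $\kappa$ becomes $\sqrt\chi$.

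The main technical step is an induction on $k$ establishing the invariant $\norm{\mV^{k+1}-\aV^{k+1}}^2\le D$, which by the previous display forces the per-iteration consensus error $\norm{\mU^{k+1}-\aU^{k+1}}^2\le\delta'$, i.e.\ the ``prox'' step is inexact only at a controlled level. The induction exploits the affine update rules for $\mY^{k+1},\mV^{k+1},\mX^{k+1}$: the components orthogonal to $\cset$ of $\mY^{k+1}$ and of the convex combination defining $\mV^{k+1}$ are convex combinations of the (small, by the inductive hypothesis and the consensus bound) orthogonal components of $\mU^k$ and $\mX^k$, so the only genuinely new contribution is the term $\alpha^{k+1}\nabla^r F(\mY^{k+1})/(1+A^{k+1}\muav/2)$. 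I would bound its orthogonal part by splitting $\nabla^r F(\mY^{k+1})=\bigl(\nabla^r F(\mY^{k+1})-\nabla F(\aY^{k+1})\bigr)+\nabla F(\aY^{k+1})$ and using the $\Lmax$-Lipschitzness of $\nabla F$ on $\R^{n\times d}$ together with the consensus error, the stochastic-error bound, and a bound on $\norm{\nabla F(\aY^{k+1})}$ in terms of $M_\xi$ and the boundedness of the iterates (the latter itself a by-product of the accelerated-method energy inequality). Solving the resulting self-referential inequality for $D$ reproduces the expression~\eqref{eq:def_sqrt_D}. This coupling -- the consensus error feeding into the iterates and back -- is the part I expect to be the main obstacle, and the precise form of $\sqrt D$ is exactly what makes the induction close.

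Finally I would invoke the convergence estimate of the inexact accelerated method. With $A^k$ generated by~\eqref{eq:Ak_def} one has a geometric lower bound of the form $A^k\ge\tfrac{1}{2\Lav}\bigl(1+\sqrt{\muav/(8\Lav)}\bigr)^{2(k-1)}$, and the method guarantees $\E[f(\ol x^N)-f(x^*)]\le\tfrac{1}{A^N}\norm{\ol u^0-x^*}^2+(\text{accumulated stochastic error})+(\text{accumulated projection error})$, where the two accumulated error terms are $O(\eps)$ for the chosen $r$ and $\delta'$. Requiring $\tfrac{1}{A^N}\norm{\ol u^0-x^*}^2\le\eps/3$ gives $N=\bigl\lceil 3\sqrt{\Lav/\muav}\,\log\bigl(4\Lav\norm{\ol u^0-x^*}^2/\eps\bigr)\bigr\rceil$, while the consensus bound applied at the last iteration gives $\E\norm{\mX^N-\aX^N}^2\le\delta'=O(\eps)$. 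Multiplying $N$ by $r$ yields~\eqref{eq:agd_computational_complexity} and multiplying $N$ by $T$ yields~\eqref{eq:agd_communication_complexity}, with $\kappa=\tfrac{\tau}{2\lambda}$ under Assumption~\ref{assum:mixing_matrix} and $\kappa=\sqrt\chi$ in the static-graph case.
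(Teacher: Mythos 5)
Your proposal matches the paper's own argument in all essentials: the same inexact-oracle accelerated-gradient framework (Theorem \ref{Th:fast_str_conv_adap} with the variance bound of Lemma \ref{lem:inexact_oracle}), the same induction maintaining $\E\norm{\mU^{k}-\aU^{k}}^2\le\delta'$ by bounding $\E\norm{\mV^{k+1}-\aV^{k+1}}\le\sqrt D$ through the gradient norm at $\mY^{k+1}$ and the energy bound on the iterates (Lemmas \ref{lem:inexact_agd_convergence} and \ref{lem:consensus_iters_strongly_convex}), and the same growth estimate on $A^k$ with the choices of $r$, $\delta'$, $N$, $T$ multiplied out to give \eqref{eq:agd_computational_complexity}--\eqref{eq:agd_communication_complexity}. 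The only cosmetic deviation is that you center the gradient bound at $\nabla F(\aY^{k+1})$ while the paper centers at $\nabla^r F(\mX^*)$ using $\Lworst$ and $M_\xi$, which changes nothing structurally.
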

	We provide the proof of Theorem \ref{th:total_iterations_strongly_convex} in Section \ref{sec:analysis}.
	
	\pd{The number of stochastic oracle calls at each node in \eqref{eq:agd_computational_complexity} coincides with the lower bound for centralized optimization up to a constant factor. When the graph is time-varying, the number of communication steps includes an additional factor $\tau/\lambda$, which characterizes graph connectivity. If the communication graph is fixed, \rev{in addition to the lower oracle complexity bound, our algorithm also achieves lower communication bound up to a polylogarithmic factor.}}

	\section{Analysis of the algorithm}\label{sec:analysis}
	\pd{Analysis of our algorithm consists of three main parts. Firstly, if an approximate consensus is imposed on local variables at each node, this ensures a stochastic inexact oracle for the global objective $f$. Secondly, we analyze an accelerated stochastic gradient method with stochastic inexact oracle. Thirdly, we analyze, how the consensus procedure allows to obtain an approximate consensus. Finally, we combine the building blocks together and prove the main result.}
	\subsection{Stochastic inexact oracle via inexact consensus}
	In this subsection we show that if a point $\mX\in\R^{n\times d}$ is close to the set $\cset$, i.e. it approximately satisfies consensus constraints, then, the mini-batched and averaged among nodes stochastic gradient provides a stochastic inexact oracle developed in \pd{\cite{devolder2013first,dvurechensky2016stochastic,gasnikov2016stochasticInter}}.

	Consider $\ol x, \ol y\in\R^{d}$ and define $\ol\bx = \onevec\ol x^\top = (\ol x\ldots \ol x)^\top,~ \ol\by = \onevec\ol y^\top = (\ol y\ldots \ol y)^\top\in \R^{n\times d}$. Let $\mX\in\R^{n\times d}$ be such that $\Pi_\cset (\mX) = \ol\mX$ and $\norm{\ol\mX - \mX}^2\le \delta'$.
	
	\begin{lemma}\label{lem:inexact_oracle}
		Define
		\begin{align*}
		\delta &= \frac{1}{2n}\cbraces{\frac{\Lmax^2}{\Lav} + \frac{2\Lmax^2}{\muav} + \Lmax - \mumin} \delta', \numberthis\label{eq:delta_inexact_oracle} \\
		f_{\delta, L, \mu}(\ol x, \mX) &= \frac{1}{n} \sbraces{F(\mX) + \angles{{\nabla} F(\mX), \ol\mX - \mX}} \\
		&\quad + \cbraces{\frac{\mumin}{2n} - \frac{2\Lmax^2}{2n\muav}}\norm{\ol\mX - \mX}^2, \\
		g_{\delta, L, \mu}(\ol x, \mX) &= \frac{1}{n}\sum_{i=1}^n \nabla f_i(x_i)\\
		\tild{g}_{\delta, L, \mu}(\ol x, \mX) &= \frac{1}{n}\sum_{i=1}^n  \frac{1}{r}\sum_{j=1}^r \nabla\bbf_i(x_i,\xi_i^j).
		\end{align*}
		Firstly, for any $\ol y\in\R^d$ it holds
		\begin{align*}
		\hspace{-0.25cm}\frac{\muav}{4}\norm{\ol y - \ol x}^2 &\le f(\ol y) - f_{\delta, L, \mu}(\ol x, \mX) - \angles{ g_{\delta, L, \mu}(\ol x, \mX), \ol y - \ol x} \\
		\hspace{-0.25cm}\Lav\norm{\ol y - \ol x}^2 + \delta &\ge f(\ol y) - f_{\delta, L, \mu}(\ol x, \mX) - \angles{ g_{\delta, L, \mu}(\ol x, \mX), \ol y - \ol x}.
		\end{align*}
		Secondly, $\tild{g}_{\delta, L, \mu}(\ol x, \mX)$ satisfies
		\begin{subequations}\label{eq:tilde_g_properties}
			\begin{align}
			\E \tild{g}_{\delta, L, \mu}(x) &= g_{\delta, L, \mu}(x) \label{eq:tilde_g_properties_1} \\
			\E \|\tild{g}_{\delta, L, \mu}(\ol x, \mX) - g_{\delta, L, \mu}(\ol x, \mX)\|^2 &\leq \frac{\sum_{i=1}^n\sigma^2_i}{n^2r}\pd{=
				\frac{\sigma_g^2}{nr}}.
			\label{eq:tilde_g_properties_2}
			\end{align}
		\end{subequations}
	\end{lemma}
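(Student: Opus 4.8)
The plan is to treat the statement as two independent claims: the two deterministic ``inexact oracle'' inequalities relating $f_{\delta, L, \mu}$ and $g_{\delta, L, \mu}$ to $f$, and the unbiasedness/variance bound \eqref{eq:tilde_g_properties} for $\tild g_{\delta, L, \mu}$.

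For the inexact oracle inequalities, write $\Delta_i := \ol x - x_i$ for the $i$-th row of $\ol\mX - \mX$; since $\ol\mX = \Pi_\cset(\mX)$ is the consensus (row-average) projection, $\sum_{i=1}^n \Delta_i = 0$ and $\sum_{i=1}^n \norm{\Delta_i}^2 = \norm{\ol\mX - \mX}^2 \le \delta'$. Unfolding the definitions of $F$, $\nabla F$, $f_{\delta, L, \mu}$ and $g_{\delta, L, \mu}$, and using $\ol y - x_i = \Delta_i + (\ol y - \ol x)$, one obtains the exact identity
\begin{align*}
f(\ol y) - f_{\delta, L, \mu}(\ol x, \mX) - \angles{g_{\delta, L, \mu}(\ol x, \mX), \ol y - \ol x} = \frac1n\sum_{i=1}^n \sbraces{f_i(\ol y) - f_i(x_i) - \angles{\nabla f_i(x_i), \ol y - x_i}} - c\sum_{i=1}^n\norm{\Delta_i}^2 ,
\end{align*}
with $c := \frac{\mumin}{2n} - \frac{\Lmax^2}{n\muav}$. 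I would then split the bracketed quantity by inserting $f(\ol x) + \angles{\nabla f(\ol x), \ol y - \ol x}$ and control three pieces: (i) the comparison of $f(\ol y)$ with $f(\ol x) + \angles{\nabla f(\ol x), \ol y - \ol x}$, bounded below by $\frac{\muav}{2}\norm{\ol y - \ol x}^2$ and above by $\frac{\Lav}{2}\norm{\ol y - \ol x}^2$, which is valid because $f = \frac1n\sum_i f_i$ inherits $\muav$-strong convexity and $\Lav$-smoothness by averaging the per-node inequalities; (ii) the comparison of $S := \frac1n\sum_i\sbraces{f_i(x_i) + \angles{\nabla f_i(x_i), \Delta_i}}$ with $f(\ol x)$, where per-node $\mu_i$-strong convexity gives $f(\ol x) - S \ge \frac1n\sum_i\frac{\mu_i}2\norm{\Delta_i}^2 \ge \frac{\mumin}{2n}\sum_i\norm{\Delta_i}^2$ and the per-node $L_i$-smoothness (descent lemma) gives $f(\ol x) - S \le \frac{\Lmax}{2n}\sum_i\norm{\Delta_i}^2$; and (iii) the gradient-mismatch term $\angles{\nabla f(\ol x) - g_{\delta, L, \mu}(\ol x, \mX), \ol y - \ol x}$, handled by Cauchy--Schwarz together with $\norm{\tfrac1n\sum_i(\nabla f_i(\ol x) - \nabla f_i(x_i))}^2 \le \tfrac1n\sum_i L_i^2\norm{\Delta_i}^2 \le \tfrac{\Lmax^2}{n}\sum_i\norm{\Delta_i}^2$ (Jensen plus $L_i$-Lipschitz gradients) and then dispatched with Young's inequality using parameter $\muav/2$ for the lower bound and $\Lav$ for the upper bound, so that exactly $\frac{\muav}{4}\norm{\ol y - \ol x}^2$ (resp.\ $\Lav\norm{\ol y - \ol x}^2$) remains on the quadratic side.

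The decisive bookkeeping step — and the one I expect to be the main obstacle — is collecting the coefficients of $\sum_i\norm{\Delta_i}^2$. On the lower side the accumulated quadratic-error coefficient comes out to be exactly $c$, which cancels the $-c\sum_i\norm{\Delta_i}^2$ in the identity and leaves $\ge \frac{\muav}{4}\norm{\ol y - \ol x}^2$. On the upper side the residual coefficient is $\frac{\Lmax}{2n} + \frac{\Lmax^2}{2n\Lav} - c = \frac1{2n}\cbraces{\frac{\Lmax^2}{\Lav} + \frac{2\Lmax^2}{\muav} + \Lmax - \mumin}$, which is nonnegative (since $\Lmax \ge \mumin$) and, multiplied by $\sum_i\norm{\Delta_i}^2 \le \delta'$, equals precisely $\delta$ as defined in \eqref{eq:delta_inexact_oracle}; the constant in $\delta$ is exactly tailored so that the three $\delta'$-proportional error sources — function-value mismatch, Young-penalized gradient mismatch, and the $-c$ term — add up to it.

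For the stochastic part, \eqref{eq:tilde_g_properties_1} is immediate from $\E_{\xi_i}\nabla\bbf_i(x, \xi_i) = \nabla f_i(x)$ and linearity of expectation. For \eqref{eq:tilde_g_properties_2}, write $\E\norm{\tild g_{\delta, L, \mu}(\ol x, \mX) - g_{\delta, L, \mu}(\ol x, \mX)}^2 = \frac1{n^2}\E\norm{\sum_{i=1}^n\cbraces{\frac1r\sum_{j=1}^r\nabla\bbf_i(x_i, \xi_i^j) - \nabla f_i(x_i)}}^2$; the outer summands are independent and zero-mean across $i$ (and the inner increments across $j$), so all cross terms vanish, leaving $\frac1{n^2}\sum_{i=1}^n\frac1{r^2}\sum_{j=1}^r\E\norm{\nabla\bbf_i(x_i, \xi_i^j) - \nabla f_i(x_i)}^2 \le \frac1{n^2 r}\sum_{i=1}^n\sigma_i^2 = \frac{\sigma_g^2}{nr}$ by \eqref{stoch_assumption_on_variance}. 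This completes the plan.
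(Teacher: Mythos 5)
Your proposal is correct, and it actually does more than the paper does in-house. For the second (stochastic) part your argument is essentially identical to the paper's Appendix proof: unbiasedness by linearity of expectation, and the variance bound by dropping zero-mean cross terms across nodes and across the $r$ iid samples, giving $\frac{1}{n^2 r}\sum_{i=1}^n\sigma_i^2=\frac{\sigma_g^2}{nr}$ (the paper writes this as a chain of inequalities but is implicitly using the same independence/zero-mean cancellation you spell out). For the first (deterministic inexact-oracle) part the paper gives no proof at all and simply cites Lemma 2.1 of \cite{rogozin2020towards}; you instead reconstruct it, and your bookkeeping checks out exactly: the identity with $\Delta_i=\ol x-x_i$ and $c=\frac{\mumin}{2n}-\frac{\Lmax^2}{n\muav}$ is valid because $\sum_i\Delta_i=0$; the three pieces are controlled correctly ($\muav$-strong convexity/$\Lav$-smoothness of the average for (i), per-node $\mumin$/$\Lmax$ bounds for (ii), Jensen plus $L_i$-Lipschitz gradients and Young with parameters $\muav/2$ and $\Lav$ for (iii)); on the lower side the $\sum_i\norm{\Delta_i}^2$ coefficients cancel exactly against $-c$, leaving $\frac{\muav}{4}\norm{\ol y-\ol x}^2$, and on the upper side the residual coefficient $\frac{\Lmax}{2n}+\frac{\Lmax^2}{2n\Lav}-c=\frac{1}{2n}\cbraces{\frac{\Lmax^2}{\Lav}+\frac{2\Lmax^2}{\muav}+\Lmax-\mumin}$ is nonnegative and, combined with $\norm{\ol\mX-\mX}^2\le\delta'$, yields precisely the $\delta$ of \eqref{eq:delta_inexact_oracle}. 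So where the paper delegates, you give a self-contained derivation that recovers the exact constants; the only thing to make explicit in a final write-up is the (standard, and implicitly assumed in the problem setup) independence of the samples across nodes, which is what kills the cross terms in \eqref{eq:tilde_g_properties_2}.
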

	
	\vspace{0.1cm}
	The first statement is proved in Lemma 2.1 of \cite{rogozin2020towards}; the proof of the second statement is provided in Appendix \ref{app:inexact_oracle}.

	\subsection{Similar Triangles Method with Stochastic Inexact Oracle}

	In this subsection we present a general algorithm for minimization problems with stochastic inexact oracle. This subsection is independent from the others and generalizes the algorithm and analysis from \cite{stonyakin2020inexact,stonyakin2019gradient} to the stochastic setting.
	Let $f(x)$ be a convex function defined on a convex set $Q\subseteq\R^m$. We assume that $f$ is equipped with stochastic inexact oracle having two components. The first component $(f_{\delta,L,\mu}(x), g_{\delta,L,\mu}(x))$ exists at any point $x\in Q$ and satisfies
	\begin{align}\label{eq:inexact_oracle_def_devolder}
	\frac{\mu}{2}\norm{y - x}^2 
	&\le f(y) - \cbraces{f_{\delta,L,\mu}(x) + \angles{g_{\delta,L,\mu}(x), y - x}} \notag\\
	&\le \frac{L}{2} \norm{y - x}^2 + \delta
	\end{align}
	for all $y\in Q$. To allow more flexibility, we assume that $\delta$ may change with the iterations of the algorithm.
	The second component $\tild{g}_{\delta,L,\mu}(x)$ is stochastic, is available at any point $x\in Q$, and  satisfies 
	\begin{align}
	\label{eq:inexact_oracle_moments}
	\E \tild{g}_{\delta, L, \mu}(x) = g_{\delta, L, \mu}(x), \quad \E \|\tild{g}_{\delta, L, \mu}(x) - g_{\delta, L, \mu}(x)\|^2 \leq \sigma^2. 
	\end{align}
	We also denote the \textit{batched} version of the stochastic component as
	\begin{align}
	\label{eq:inexact_oracle_batched_def}
	\tild{g}^{\ r}_{\delta, L, \mu}(x) = \frac{1}{r} \sum_{i=1}^r \tild{g}_{\delta, L, \mu}(x, \xi_i),
	\end{align}
	where $\xi_i$'s are iid realizations of the random variable $\xi$. It is straightforward that 
	\begin{subequations}\label{eq:inexact_oracle_batched_prop}
		\begin{align}
		&\E \tild{g}^{\ r}_{\delta, L, \mu}(x) = g_{\delta, L, \mu}(x), \\ &\E \|\tild{g}^{\ r}_{\delta, L, \mu}(x) - g_{\delta, L, \mu}(x)\|^2 \leq \frac{\sigma^2}{r}.
		\end{align}
		
	\end{subequations}
	
	Let us consider the following algorithm for minimizing $f$. Note that the error $\delta$ of the oracle and batch size $r$  may depend on the iteration counter $k$. Moreover, we let $\delta$ be stochastic.
	\begin{algorithm}[ht]
		\caption{AGD with stochastic inexact oracle}
		\label{alg:FastAlg2_strong}
		\begin{algorithmic}[1]
			\REQUIRE{Initial guess $x^0$, constants $L, \mu \geq 0$, sequence of batch sizes $\{r_k\}_{k\geq 0}$.}
			\par Set $y^0:=x^0, u^0:=x^0, \alpha^0:=0, A^0:=0$
			\FOR{$k \geq 0$}
			\STATE{Find $\alpha^{k+1}$ as the greater root of: 
				$$(A^k + \alpha^{k+1})(1 + A^k \mu) = L \left( \alpha^{k+1} \right)^2$$ } 
			\STATE{\vspace{-0.5cm} Renew the following variables:
				\begin{gather*}
				A^{k+1}:=A^k + \alpha^{k+1} \\
				y^{k+1}:=\frac{\alpha^{k+1}u^k + A^k x^k}{A^{k+1}} 
				\end{gather*}
			}
			\STATE{\vspace{-0.5cm}
				Define the function:
				\begin{align*}
				&\phi^{k+1}(x) := \alpha^{k+1} \angles{\tild{g}^{\ r_{k+1}}_{\delta_{k+1}, L, \mu}(y^{k+1}),x - y^{k+1}} \\ &\quad + (1 + A^k \mu) \norm{x - u^k}^2 + \alpha^{k+1} \norm{x - y^{k+1}}^2
				\end{align*}
			}
			\STATE{\vspace{-0.5cm} Solve the optimization problem:
				$$u^{k+1} := \argmin_{x \in Q} \phi^{k+1}(x)$$}
			\STATE{\vspace{-0.5cm} Update $x$:
				$$x^{k+1}:=\frac{\alpha^{k+1} u^{k+1} + A^k x^k}{A^{k+1}}$$
			}
			\ENDFOR
		\end{algorithmic}
	\end{algorithm}
	
	We analyze convergence of Algorithm \ref{alg:FastAlg2_strong} by revisiting the proof of Theorem 3.4 in \cite{stonyakin2020inexact} and formulate the result in Theorem \ref{Th:fast_str_conv_adap} below. The complete proof is provided in Appendix \ref{app:fast_str_conv_adap}.
	\begin{theorem}
		\label{Th:fast_str_conv_adap}
		Let Algorithm \ref{alg:FastAlg2_strong} be applied to solve the problem $\min_{x\in Q} f(x)$. \pd{Let also $\norm{u^0 - x^*}\leq R$.} Then, after $N$ iterations we have
		{\small
			\begin{align}
			&\mathbb{E} f(x^N) - f(x^*) \leq \frac{1}{A^N} \left( \pd{R^2} +  \sum_{i=1}^N A^i \left( \frac{\sigma^2}{2Lr_{i}} + \E\delta_i \right) \right)\\
			&\E \norm{u^N - x^*}^2 \leq \frac{1}{1 + A^N \mu} \left(\pd{R^2} + \sum_{i=1}^N A^i \left( \frac{\sigma^2}{2Lr_{i}} + \E\delta_i \right) \right)
			\end{align}
		}
	\end{theorem}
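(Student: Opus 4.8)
The plan is to mimic the standard estimating-sequences / similar-triangles proof of accelerated gradient descent, but carry the stochastic-inexact-oracle error terms through the recursion. I would first record the algebraic consequences of the step definitions: from the equation $(A^k+\alpha^{k+1})(1+A^k\mu)=L(\alpha^{k+1})^2$ and $A^{k+1}=A^k+\alpha^{k+1}$ one gets the key identity $A^{k+1}(1+A^k\mu)=L(\alpha^{k+1})^2$, and a lower bound on the growth of $A^k$ (roughly $A^{k+1}\ge A^k(1+\tfrac12\sqrt{\mu/L})$ in the strongly convex regime, $A^{k+1}\ge (k+1)^2/(4L)$ when $\mu=0$), which is what ultimately converts the final bound into an explicit iteration count. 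These are purely deterministic facts about the scalar sequences.

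Next I would set up the Lyapunov/potential function $\Psi_k := A^k\big(f(x^k)-f(x^*)\big) + (1+A^k\mu)\norm{u^k - x^*}^2$ and aim to prove a one-step inequality of the form $\E[\Psi_{k+1}\mid\mathcal F_k] \le \Psi_k + A^{k+1}\big(\tfrac{\sigma^2}{2Lr_{k+1}} + \E\delta_{k+1}\big)$. The core of this is the optimality condition for $u^{k+1}=\argmin_{x\in Q}\phi^{k+1}(x)$: since $\phi^{k+1}$ is a strongly convex quadratic (coefficient $1+A^k\mu+\alpha^{k+1} = 1+A^{k+1}\mu - \alpha^{k+1}(\mu-\ldots)$; in any case $\ge 1+A^k\mu$), the three-point inequality gives $\phi^{k+1}(x^*) \ge \phi^{k+1}(u^{k+1}) + (1+A^k\mu)\norm{u^{k+1}-x^*}^2$ (or the appropriate constant). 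Expanding $\phi^{k+1}(x^*)$ and $\phi^{k+1}(u^{k+1})$, and using the definitions $y^{k+1}=(\alpha^{k+1}u^k+A^kx^k)/A^{k+1}$, $x^{k+1}=(\alpha^{k+1}u^{k+1}+A^kx^k)/A^{k+1}$ so that $x^{k+1}-y^{k+1}=\tfrac{\alpha^{k+1}}{A^{k+1}}(u^{k+1}-u^k)$, one replaces the term $\langle \tilde g^{r_{k+1}}(y^{k+1}), u^{k+1}-u^k\rangle$ by $\langle \tilde g^{r_{k+1}}, y^{k+1}-u^k\rangle + \tfrac{A^{k+1}}{\alpha^{k+1}}\langle\tilde g^{r_{k+1}}, x^{k+1}-y^{k+1}\rangle$. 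The last inner product is handled by the upper part of the inexact-oracle inequality \eqref{eq:inexact_oracle_def_devolder} applied at $y=x^{k+1}$, $x=y^{k+1}$ (after taking conditional expectation so $\tilde g$ becomes $g_{\delta,L,\mu}$), which produces $-A^{k+1}(f_{\delta,L,\mu}(y^{k+1})+\langle g, x^{k+1}-y^{k+1}\rangle) \ge -A^{k+1}f(x^{k+1}) + A^{k+1}\delta_{k+1}$ up to the quadratic term, and that quadratic term $\tfrac{L}{2}\norm{x^{k+1}-y^{k+1}}^2 = \tfrac{L(\alpha^{k+1})^2}{2(A^{k+1})^2}\norm{u^{k+1}-u^k}^2 = \tfrac{1+A^k\mu}{2A^{k+1}}\norm{u^{k+1}-u^k}^2$ is exactly cancelled (up to the factor) against the $\norm{x-u^k}^2$ term coming out of $\phi^{k+1}$ — this is the "similar triangles" miracle that makes the whole thing telescope. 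The remaining inner product $\langle g, y^{k+1}-u^k\rangle$ together with the $A^k$-weighted convexity/strong-convexity of $f$ at $x^k$ relative to $y^{k+1}$ (lower part of \eqref{eq:inexact_oracle_def_devolder}) is recombined, again using the definition of $y^{k+1}$, into $A^k(f(x^k)-f(x^*))$ plus a $\langle g, x^* - y^{k+1}\rangle$ contribution that is absorbed by the other terms.

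The stochastic part enters in two places that I would treat carefully. First, every application of the deterministic inexact-oracle inequalities \eqref{eq:inexact_oracle_def_devolder} requires $g_{\delta,L,\mu}$, not $\tilde g^{r_{k+1}}$, so I take conditional expectation $\E[\cdot\mid\mathcal F_k]$ at the right moment, using $\E[\tilde g^{r_{k+1}}(y^{k+1})\mid\mathcal F_k] = g_{\delta,L,\mu}(y^{k+1})$ (since $y^{k+1}$ is $\mathcal F_k$-measurable) from \eqref{eq:inexact_oracle_batched_prop}. Second, the cross term $\langle \tilde g^{r_{k+1}} - g_{\delta,L,\mu}, u^k - (\text{something }\mathcal F_k\text{-measurable})\rangle$ has zero conditional mean, but when I complete the square against $\norm{x-u^k}^2$ (or equivalently bound a term like $\langle \tilde g^{r_{k+1}} - g, v\rangle$ for a not-quite-measurable $v$), I pick up a variance term which by \eqref{eq:inexact_oracle_batched_prop} is at most $\tfrac{\sigma^2}{r_{k+1}}$, contributing the $\tfrac{A^{k+1}\sigma^2}{2Lr_{k+1}}$ term after accounting for the $L$ in the quadratic coefficient and the identity $A^{k+1}=L(\alpha^{k+1})^2/(1+A^k\mu)$. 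Once the one-step inequality $\E[\Psi_{k+1}\mid\mathcal F_k]\le \Psi_k + A^{k+1}(\tfrac{\sigma^2}{2Lr_{k+1}}+\E\delta_{k+1})$ is established, I take total expectations, sum from $k=0$ to $N-1$, telescope, use $A^0=0$ and $u^0=x^0$ so $\Psi_0 = \norm{u^0-x^*}^2\le R^2$, and divide by $A^N$ (for the function-value bound) or by $1+A^N\mu$ (for the iterate bound), noting $\Psi_N\ge A^N(f(x^N)-f(x^*))$ and $\Psi_N\ge (1+A^N\mu)\norm{u^N-x^*}^2$ since $f(x^N)\ge f(x^*)$.

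I expect the main obstacle to be bookkeeping the stochastic cross-term cleanly: one must argue that the only place where the variance of $\tilde g^{r_{k+1}}$ costs anything is the completion of the square, and that everywhere else replacing $\tilde g^{r_{k+1}}$ by its conditional mean $g_{\delta,L,\mu}(y^{k+1})$ is exact in expectation — in particular that the optimizer $u^{k+1}$, although it depends on the random $\tilde g^{r_{k+1}}$, still satisfies the variational inequality $\phi^{k+1}(x^*)\ge\phi^{k+1}(u^{k+1})+(1+A^k\mu)\norm{u^{k+1}-x^*}^2$ \emph{pathwise}, so no measurability issue arises there; the expectation is taken only after this pathwise inequality is combined with the (conditional-expectation form of the) oracle inequalities. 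Matching the precise constant $\tfrac{\sigma^2}{2Lr_i}$ rather than something off by a factor of $2$ will require using the identity $\tfrac{L(\alpha^{i})^2}{(A^i)^2}=\tfrac{1+A^{i-1}\mu}{A^i}$ in the right direction; this is the one place where the exact form of the $\alpha$-recursion is essential and where I would be most careful. Beyond that, the proof is the standard similar-triangles telescoping argument of \cite{stonyakin2020inexact} with the two stochastic modifications described above.
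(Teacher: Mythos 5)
Your proposal is correct and follows essentially the same route as the paper's proof: the similar-triangles telescoping with the pathwise three-point optimality inequality for $u^{k+1}$, conditional expectations to cancel the zero-mean cross terms at $\mathcal{F}_k$-measurable points, and a Young-type completion of the square on the single noise-dependent inner product (the one involving $x^{k+1}-y^{k+1}$), which yields the $\frac{\sigma^2}{2Lr_i}$ terms via the identity $L(\alpha^{k+1})^2=A^{k+1}(1+A^k\mu)$. Your Lyapunov formulation $\Psi_k=A^k(f(x^k)-f(x^*))+(1+A^k\mu)\norm{u^k-x^*}^2$ is just a repackaging of the paper's summed inequality, and the derivation of the bound on $\E\norm{u^N-x^*}^2$ from $f(x^N)\ge f(x^*)$ matches the paper as well.
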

	
	In order to establish the rate, we recall the results of Lemma 5 in \cite{devolder2013first} and Lemma 3.7 in \cite{stonyakin2020inexact} and estimate the growth of coefficients $A^N$.
	\begin{lemma}
		\label{lem:Ak_properties}
		Coefficient $A^N$ can be lower-bounded as following: \rev{$A^N \geq 1/L\cdot\left( 1 + (1/2)\sqrt{{\mu}/L} \right)^{2(N-1)}$}.
		Moreover, we have \rev{$\sum_{i=1}^N A^i / A^N \le {1 + \sqrt{{L}/{\mu}}}$}.
	\end{lemma}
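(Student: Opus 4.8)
The plan is to derive both estimates from the single algebraic identity obtained by substituting $\alpha^{k+1}=A^{k+1}-A^k$ into the defining quadratic, namely
\[
L\,(A^{k+1}-A^k)^2 = A^{k+1}\,(1+A^k\mu).
\]
First I would record the elementary facts used throughout: the quadratic in $\alpha^{k+1}$ has constant term $-A^k(1+A^k\mu)\le 0$, so its greater root $\alpha^{k+1}$ is nonnegative; at $k=0$ (where $A^0=0$) it equals $1/L$, hence $A^1=1/L$; and since $\alpha^{k+1}>0$ for every $k$, we have $A^{k+1}>A^k>0$ for all $k\ge 1$. These let me pass to square roots and run an induction starting from $k=1$.

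For the lower bound on $A^N$, I would take square roots in the identity, $\sqrt L\,(A^{k+1}-A^k)=\sqrt{A^{k+1}}\sqrt{1+A^k\mu}$, factor $A^{k+1}-A^k=(\sqrt{A^{k+1}}-\sqrt{A^k})(\sqrt{A^{k+1}}+\sqrt{A^k})$, and then use $\sqrt{A^{k+1}}+\sqrt{A^k}\le 2\sqrt{A^{k+1}}$ together with $1+A^k\mu\ge A^k\mu$. This yields, for $k\ge 1$,
\[
\sqrt{A^{k+1}}-\sqrt{A^k}\ \ge\ \frac{\sqrt{1+A^k\mu}}{2\sqrt L}\ \ge\ \tfrac12\sqrt{\mu/L}\,\sqrt{A^k},
\]
that is, $\sqrt{A^{k+1}}\ge\bigl(1+\tfrac12\sqrt{\mu/L}\bigr)\sqrt{A^k}$. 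Iterating this from $k=1$ and inserting $A^1=1/L$ gives $\sqrt{A^N}\ge\bigl(1+\tfrac12\sqrt{\mu/L}\bigr)^{N-1}/\sqrt L$, and squaring produces the first claim.

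For the bound on $\sum_{i=1}^N A^i/A^N$, I would square the same per-step inequality to get $A^{k+1}\ge q\,A^k$ for all $k\ge 1$, with $q:=\bigl(1+\tfrac12\sqrt{\mu/L}\bigr)^2$, so that $A^i\le q^{-(N-i)}A^N$ for $1\le i\le N$. Summing the resulting geometric series,
\[
\frac{1}{A^N}\sum_{i=1}^N A^i\ \le\ \sum_{m=0}^{N-1}q^{-m}\ <\ \frac{q}{q-1}\ =\ 1+\frac{1}{q-1},
\]
and since $q-1=\sqrt{\mu/L}+\tfrac14\,\mu/L\ge\sqrt{\mu/L}$ we have $1/(q-1)\le\sqrt{L/\mu}$, hence $\sum_{i=1}^N A^i/A^N\le 1+\sqrt{L/\mu}$.

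The manipulations are routine; the only point needing care — and the reason the induction cannot simply be started at $k=0$ — is that $A^0=0$ makes the multiplicative recursion degenerate at the first step, so one must compute $A^1=1/L$ explicitly before iterating. This argument is the specialization of Lemma 5 of \cite{devolder2013first} and Lemma 3.7 of \cite{stonyakin2020inexact} to the coefficient rule used here.
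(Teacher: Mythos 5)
Your argument is correct, and its first half is essentially the paper's own proof: the paper likewise starts from $A^N(1+\mu A^{N-1})=L(A^N-A^{N-1})^2$, factors the difference of squares, bounds $\sqrt{A^N}+\sqrt{A^{N-1}}\le 2\sqrt{A^N}$, drops the $1$ in favor of $\mu A^{N-1}$, and iterates the resulting contraction $\sqrt{A^N}\ge\bigl(1+\tfrac12\sqrt{\mu/L}\bigr)\sqrt{A^{N-1}}$ from $A^1=1/L$. Where you genuinely diverge is the second claim. The paper does not argue via a geometric series at all: it observes that the recursion $1+\mu A^k=L(\alpha^{k+1})^2/A^{k+1}$ is, after the rescaling $A^{k+1}=B^k/L$, exactly the coefficient recursion $L+\mu B^k=L(\beta^{k+1})^2/B^{k+1}$ studied in \cite{devolder2013first} (see also Lemma A.1 of \cite{rogozin2020towards} and Lemma 3.7 of \cite{stonyakin2020inexact}), and imports the bound $\sum_{i=0}^{k}B^i/B^k\le 1+\sqrt{L/\mu}$ proved there. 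You instead square your per-step inequality to get $A^{k+1}\ge qA^k$ with $q=\bigl(1+\tfrac12\sqrt{\mu/L}\bigr)^2$, sum the geometric series to obtain $\sum_{i=1}^N A^i/A^N\le q/(q-1)$, and use $q-1\ge\sqrt{\mu/L}$; this is self-contained, avoids the external lemma and the exact identification with the classical sequence, and still lands on the same constant $1+\sqrt{L/\mu}$, at the mild cost of re-deriving what the cited lemma already provides (the paper's route gives an exact correspondence $A^{k+1}=B^k/L$, which could be reused for other properties of the sequence). Your care in starting the multiplicative iteration at $k=1$, after computing $A^1=1/L$ explicitly, correctly handles the degenerate step $A^0=0$ that the paper treats the same way.
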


	\subsection{Proof of the main result}
	
	Thoughout this section, we denote $L = 2\Lav,~ \mu = \muav/2$ and $\sigma^2 = \sigmaav^2/(nr)$.

	\subsubsection{Outer loop}
	
	\begin{lemma}\label{lem:inexact_agd_convergence}
		Provided that consensus accuracy is $\delta'$, i.e. $\E\norm{\mU^{j} - \ol\mU^j}^2\le \delta' \text{ for } j = 1, \ldots, k$, we have
		{\small
			\begin{align}
			&\E f(\ol x^k) - f(x^*) \le \frac{1}{A^k}\cbraces{\norm{\ol u^0 - x^*}^2 + \cbraces{\frac{\sigma^2}{2Lr} + \delta} \sum_{i=1}^k A^i} \label{eq:inexact_agd_function_residual} \\
			&\E\norm{\ol u^k - x^*}^2 \le \frac{1}{1 + A^k\mu} \cbraces{\norm{\ol u^0 - x^*}^2 + \cbraces{\frac{\sigma^2}{2Lr} + \delta} \sum_{i=1}^k A^i} \nonumber
			\end{align}
		}
		where $\delta$ is given in \eqref{eq:delta_inexact_oracle}.
	\end{lemma}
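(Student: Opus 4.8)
The plan is to recognise that the node-averaged trajectory of Algorithm~\ref{alg:decentralized_agd} is \emph{exactly} the trajectory produced by Algorithm~\ref{alg:FastAlg2_strong} run on $\min_{x\in\R^d}f(x)$ with the stochastic inexact oracle supplied by Lemma~\ref{lem:inexact_oracle}, and then to invoke Theorem~\ref{Th:fast_str_conv_adap} as a black box. Concretely, I would take $Q=\R^d$, $L=2\Lav$, $\mu=\muav/2$, constant batch $r_k\equiv r$ and constant oracle error $\delta_k\equiv\delta$ with $\delta$ as in \eqref{eq:delta_inexact_oracle}. For each $k$ the gradient in Algorithm~\ref{alg:decentralized_agd} is queried at $\mY^{k+1}$, which satisfies $\Pi_\cset(\mY^{k+1})=\ol\mY^{k+1}$; provided $\E\norm{\mY^{k+1}-\ol\mY^{k+1}}^2\le\delta'$, Lemma~\ref{lem:inexact_oracle} guarantees that $\big(f_{\delta,L,\mu}(\ol y^{k+1},\mY^{k+1}),\,g_{\delta,L,\mu}(\ol y^{k+1},\mY^{k+1}),\,\tild{g}_{\delta,L,\mu}(\ol y^{k+1},\mY^{k+1})\big)$ is a stochastic $(\delta,L,\mu)$-inexact oracle for $f$ at $\ol y^{k+1}$ in the sense of \eqref{eq:inexact_oracle_def_devolder}--\eqref{eq:inexact_oracle_moments}, the variance bound being $\sigma^2=\sigmaav^2/(nr)$ by \eqref{eq:tilde_g_properties_2}; the fresh independent samples drawn at each outer iteration make these oracle calls conditionally unbiased and independent across $k$, as Theorem~\ref{Th:fast_str_conv_adap} requires.

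Next I would verify the iterate identification by induction on $k$. Since $\mX^0=\mU^0\in\cset$, the initializations agree: $\ol x^0=\ol u^0=\ol y^0=x^0$. Averaging the $\mY^{k+1}$- and $\mX^{k+1}$-updates of Algorithm~\ref{alg:decentralized_agd} over nodes (i.e. left-multiplying by $\tfrac1n\onevec^\top$) and using linearity recovers the $y^{k+1}$- and $x^{k+1}$-updates of Algorithm~\ref{alg:FastAlg2_strong}, and the coefficient recursions coincide because $(A^k+\alpha^{k+1})(1+A^k\mu)=L(\alpha^{k+1})^2$ with $L=2\Lav$, $\mu=\muav/2$. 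For the $u$-update, note that $\tfrac1n\onevec^\top\nabla^r F(\mY^{k+1})=\tild{g}_{\delta,L,\mu}(\ol y^{k+1},\mY^{k+1})$, so averaging Step~\ref{alg_step:agd_step} yields precisely the first-order-optimality point of the strongly convex quadratic $\phi^{k+1}$ of Algorithm~\ref{alg:FastAlg2_strong}; finally, since the consensus step applies a finite product of doubly stochastic matrices $\mW^t$ to $\mV^{k+1}$, the average is preserved, so $\ol u^{k+1}=\ol v^{k+1}$. Hence $\ol x^k,\ol u^k,\ol y^k$ are exactly the iterates of Algorithm~\ref{alg:FastAlg2_strong}.

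It then remains to discharge the hypothesis used above, namely $\E\norm{\mY^{j}-\ol\mY^{j}}^2\le\delta'$ for all relevant $j$, from the assumed $\E\norm{\mU^{j}-\ol\mU^{j}}^2\le\delta'$ for $j=1,\dots,k$ together with $\mU^0\in\cset$. Unrolling the $\mX$-update shows that $\mX^j$ is a convex combination of $\mU^0,\dots,\mU^j$ and that $\mY^{j+1}$ is a convex combination of $\mU^j$ and $\mX^j$; since $\mathbf I-\tfrac1n\onevec\onevec^\top$ is linear and $\norm{\cdot}^2$ is convex, Jensen's inequality gives $\E\norm{\mY^{j+1}-\ol\mY^{j+1}}^2\le\max_{0\le\ell\le j}\E\norm{\mU^\ell-\ol\mU^\ell}^2\le\delta'$. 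With the inexact oracle thus valid along the whole run, Theorem~\ref{Th:fast_str_conv_adap} applied with $R=\norm{\ol u^0-x^*}$, $r_i\equiv r$, $\delta_i\equiv\delta$ yields \eqref{eq:inexact_agd_function_residual} and the companion bound on $\E\norm{\ol u^k-x^*}^2$ verbatim.

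The hard part will not be any calculation but the bookkeeping that links ``consensus accuracy $\delta'$ on the $\mU$-iterates'' to ``a valid $(\delta,L,\mu)$-inexact oracle at every query point $\mY^{k+1}$'': one has to track the convex-combination structure of $\mX^k$ and $\mY^{k+1}$ and keep straight which iteration's consensus hypothesis is needed for which query. Once that is settled, the lemma is a direct substitution of $L=2\Lav$, $\mu=\muav/2$, $\sigma^2=\sigmaav^2/(nr)$ and $\delta$ from \eqref{eq:delta_inexact_oracle} into Theorem~\ref{Th:fast_str_conv_adap}.
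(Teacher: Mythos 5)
Your proposal is correct and follows essentially the same route as the paper's proof: an induction showing that $\mY^j,\mX^j,\mU^j$ stay in a $\sqrt{\delta'}$-neighborhood of $\cset$ (via their convex-combination structure and the double stochasticity of the mixing matrices), so that Lemma \ref{lem:inexact_oracle} supplies the stochastic $(\delta,L,\mu)$-oracle at each $\ol y^{k+1}$, after which Theorem \ref{Th:fast_str_conv_adap} is applied with $L=2\Lav$, $\mu=\muav/2$, $\sigma^2=\sigmaav^2/(nr)$. You merely spell out more explicitly than the paper the identification of the node-averaged iterates with the iterates of Algorithm \ref{alg:FastAlg2_strong}, which the paper leaves implicit.
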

	\begin{proof}
		First, assuming that $\E\norm{\mU^{j} - \ol\mU^j}^2\le \delta'$, we show that $\mY^j, \mU^j, \mX^j$ lie in $\sqrt{\delta'}$-neighborhood of $\cset$ by induction. At $j=0$, we have $\norm{\mX^0 - \ol\mX^0} = \norm{\mU^0 - \ol\mU^0} = 0$. Using $A^{j+1} = A^j + \alpha^j$, we get an induction pass $j\to j+1$.
		\begin{align*}
		&\E\norm{\mY^{j+1} - \ol\mY^{j+1}} \\
		&\qquad \le \frac{\alpha^{j+1}}{A^{j+1}}\E\norm{\mU^j - \ol\mU^j} + \frac{A^j}{A^{j+1}} \E\norm{\mX^j - \ol\mX^j}\le \sqrt{\delta'}, \\
		&\E\norm{\mX^{j+1} - \ol\mX^{j+1}} \\
		&\qquad \le \frac{\alpha^{j+1}}{A^{j+1}}\E\norm{\mU^{j+1} - \ol\mU^{j+1}} + \frac{A^j}{A^{j+1}} \E\norm{\mX^j - \ol\mX^j}\le \sqrt{\delta'}.
		\end{align*}
		Therefore, $g(\ol y) = \frac{1}{n}\sum_{i=1}^n \nabla f(y_i)$ represents the inexact gradient of $f$, and the desired result directly follows from Theorem \ref{Th:fast_str_conv_adap}.
	\end{proof}
	
	\subsubsection{Consensus subroutine iterations}
	In order to establish communication complexity of Algorithm \ref{alg:FastAlg2_strong}, we estimate the number of consensus iterations in the following Lemma.
	\begin{lemma}\label{lem:consensus_iters_strongly_convex}
		Let consensus accuracy be maintained at level $\delta'$, i.e. $\E\norm{\mU^j - \ol\mU^j}^2\le \delta' \text{ for } j = 1, \ldots, k$ and let Assumption \ref{assum:mixing_matrix} hold. Then it is sufficient to make $T_k = T = \frac{\tau}{2\lambda}\log\frac{D}{\delta'}$ consensus iterations, where $D$ is defined in \eqref{eq:def_sqrt_D}, in order to ensure $\delta'$-accuracy on step $k+1$, i.e. $\E\norm{\mU^{k+1} - \ol\mU^{k+1}}^2\le \delta'$.
	\end{lemma}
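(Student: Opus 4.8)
The plan is to bound the consensus error of the iterate $\mV^{k+1}$ entering the Consensus subroutine, and then use the contraction property of Assumption \ref{assum:mixing_matrix} to count how many steps drive this error below $\delta'$. First I would recall that the Consensus subroutine applies $\mW^t$ for $t = 1,\dots,T$, so by the contraction property (applied $\lfloor T/\tau\rfloor$ times, each $\tau$-block shrinking the deviation by a factor $(1-\lambda)$) we get $\E\norm{\mU^{k+1} - \ol\mU^{k+1}}^2 \le (1-\lambda)^{2\lfloor T/\tau\rfloor}\E\norm{\mV^{k+1} - \ol\mV^{k+1}}^2$. Using $(1-\lambda)^{1/\lambda}\le e^{-1}$ and the choice $T = \frac{\tau}{2\lambda}\log\frac{D}{\delta'}$, this reduces to showing $\E\norm{\mV^{k+1} - \ol\mV^{k+1}}^2 \le D$, after which $\E\norm{\mU^{k+1}-\ol\mU^{k+1}}^2 \le \delta'$ follows. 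So the real work is the a priori bound $\E\norm{\mV^{k+1} - \ol\mV^{k+1}}^2 \le D$ with $D$ as in \eqref{eq:def_sqrt_D}.

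To establish that bound, I would expand the definition of $\mV^{k+1}$ from Step \ref{alg_step:agd_step} of Algorithm \ref{alg:decentralized_agd}. Since projection onto $\cset$ is linear (it replaces each row by the row-average) and annihilates the averaging-of-rows operator, the deviation $\mV^{k+1} - \ol\mV^{k+1}$ is a convex combination of the deviations of $\mY^{k+1}$, $\mU^k$ (each of which is $\sqrt{\delta'}$ in expectation by the induction carried out in the proof of Lemma \ref{lem:inexact_agd_convergence}, using the maintained consensus accuracy hypothesis) together with the deviation of the scaled batched gradient term $\frac{\alpha^{k+1}}{1 + A^{k+1}\muav/2}\nabla^r F(\mY^{k+1})$. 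The coefficients $\frac{\alpha^{k+1}\muav/2}{1+A^{k+1}\muav/2}$ and $\frac{1+A^k\muav/2}{1+A^{k+1}\muav/2}$ are in $[0,1]$ and sum to $1$, so by the triangle inequality (and convexity of $\norm{\cdot}^2$, or Cauchy–Schwarz to split into a few terms) the only nontrivial piece is controlling $\frac{\alpha^{k+1}}{1+A^{k+1}\muav/2}\norm{\nabla^r F(\mY^{k+1}) - \overline{\nabla^r F(\mY^{k+1})}}$.

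For the gradient term I would use two ingredients: (i) the coefficient $\frac{\alpha^{k+1}}{1+A^{k+1}\muav/2}$ is bounded — from the defining recursion $(A^k+\alpha^{k+1})(1+A^k\muav/2) = 2\Lav(\alpha^{k+1})^2$ one extracts $\frac{\alpha^{k+1}}{1+A^{k+1}\muav/2}\le \frac{1}{\sqrt{\Lav\muav}}$ (this is the analogue of the standard $\alpha^{k+1}/A^{k+1}$ estimates, and is exactly where the $\frac{1}{\sqrt{\Lav\muav}}$ and $\frac{2}{\muav}$ prefactors in $\sqrt D$ come from); and (ii) a bound on $\E\norm{\nabla^r F(\mY^{k+1}) - \overline{\nabla^r F(\mY^{k+1})}}^2$. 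For the latter, split $\nabla^r F(\mY^{k+1})$ into its mean $\nabla F(\mY^{k+1})$ plus stochastic noise; the noise contributes $\sigmaav^2/(nr^2)$-type terms (bounded via \eqref{eq:tilde_g_properties_2}, rescaled, giving the $\frac{\sigmaav^2}{4n\Lav r^2}$ and $\delta$ terms), while $\nabla F(\mY^{k+1})$ is handled by writing $\nabla f_i(y_i) = \nabla f_i(y_i) - \nabla f_i(x^*,\xi) + \nabla f_i(x^*,\xi)$, using $\Lmax$-Lipschitzness to get the $\frac{2\Lmax}{\muav}\sqrt n(\cdots)^{1/2}$ term with $\norm{\ol y^{k+1}-x^*}^2$ controlled via Lemma \ref{lem:inexact_agd_convergence} (bounded in turn by $\norm{\ol u^0 - x^*}^2$ plus the accumulated error terms, all of which are $O(1)$ by the parameter choices), and using the gradient-norm-at-optimum bound $M_\xi$ to get the $\frac{2nM_\xi}{\sqrt{\Lav\muav}}$ term. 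Assembling these pieces and taking square roots reproduces exactly the expression \eqref{eq:def_sqrt_D} for $\sqrt D$.

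The main obstacle I anticipate is the bookkeeping in step three: one must carefully propagate the induction hypothesis $\E\norm{\mU^j-\ol\mU^j}^2\le\delta'$ through the outer-loop updates to control $\norm{\mY^{k+1}-\ol\mY^{k+1}}$ and simultaneously invoke Lemma \ref{lem:inexact_agd_convergence} to bound $\E\norm{\ol y^{k+1}-x^*}^2$ — being mindful that this latter lemma itself only holds under the same consensus-accuracy hypothesis, so the argument is really a simultaneous induction on $k$. Managing the interplay between the deterministic deviation bound and the stochastic noise bound (i.e. which expectations are conditional on the iterates and which are total), and verifying that all the $O(1)$ terms genuinely stay bounded uniformly in $k$ under the stated choices of $r$ and $\delta'$, is the delicate part; the contraction-counting in step one is then routine.
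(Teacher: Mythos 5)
Your proposal follows essentially the same route as the paper's proof: reduce via the contraction property to an a priori bound $\E\norm{\mV^{k+1}-\ol\mV^{k+1}}^2\le D$, then control $\mV^{k+1}$'s deviation through the convex-combination structure of step \ref{alg_step:agd_step} (each piece at most $\sqrt{\delta'}$ by the maintained-consensus induction), the step-size bound $\alpha^{k+1}/(1+A^{k+1}\muav/2)\lesssim 1/\sqrt{\Lav\muav}$ extracted from the recursion, and a bound on the gradient term obtained by adding and subtracting the gradient at $x^*$, using Lipschitzness, $M_\xi$, and Lemmas \ref{lem:inexact_agd_convergence} and \ref{lem:Ak_properties} to bound $\norm{\ol y^{k+1}-x^*}$. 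The only deviations are minor bookkeeping (the paper keeps the stochastic batched gradient throughout with its almost-sure Lipschitz constant rather than a mean-plus-noise split, its coefficient bound is $2/\sqrt{\Lav\muav}$, and the $\sigmaav^2/(4n\Lav r^2)+\delta$ terms in $D$ arise from the $\norm{\ol y^{k+1}-x^*}$ estimate rather than from gradient noise), none of which changes the argument.
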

	Lemma \ref{lem:consensus_iters_strongly_convex} is analogous to Lemma A.3 in \cite{rogozin2020towards} and is proven in Appendix \ref{app:consensus_iters_strongly_convex}.

	\pd{In the same way we can prove that if the communication network is static, we can establish a sufficiently accurate consensus in the next iteration.
		\begin{lemma}\label{lem:consensus_iters_strongly_convex_fixed}
			Let consensus accuracy be maintained at level $\delta'$, i.e. $\E\norm{\mU^j - \ol\mU^j}^2\le \delta' \text{ for } j = 1, \ldots, k$ and let the communication network be static. Then it is sufficient to make $T_k = T = \sqrt{\chi}\log\frac{D}{\delta'}$ consensus iterations, where $D$ is defined in \eqref{eq:def_sqrt_D}, in order to ensure $\delta'$-accuracy on step $k+1$, i.e. $\E\norm{\mU^{k+1} - \ol\mU^{k+1}}^2\le \delta'$.
		\end{lemma}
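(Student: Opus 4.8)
The plan is to repeat the proof of Lemma~\ref{lem:consensus_iters_strongly_convex} almost verbatim, the only change being the contraction rate used inside the Consensus subroutine. When the network is static, the subroutine is run with the Chebyshev-accelerated mixing matrix $W_T=P_T(\mW)$ described in Section~II, which is doubly stochastic (so $\ol\mV^{k+1}=\ol\mU^{k+1}$) and satisfies
\[
\norm{W_T\mV^{k+1}-\ol\mV^{k+1}}\le\bigl(1-\sqrt{1-\rho}\bigr)^{T}\norm{\mV^{k+1}-\ol\mV^{k+1}},\qquad\chi=\tfrac{1}{1-\rho}.
\]
Thus the single-step factor $1-\sqrt{1-\rho}$ takes over the role that the $\tau$-block factor $1-\lambda$ of Assumption~\ref{assum:mixing_matrix} plays in Lemma~\ref{lem:consensus_iters_strongly_convex}, and nothing else in the argument is affected.

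First I would re-derive, unchanged, the a priori bound on how far the consensus input $\mV^{k+1}$ can be from $\cset$. From line~\ref{alg_step:agd_step} of Algorithm~\ref{alg:decentralized_agd}, the linearity of $\Pi_\cset$, and the fact that under the hypothesis $\E\norm{\mU^j-\ol\mU^j}^2\le\delta'$ the iterates $\mY^{k+1},\mU^k,\mX^k$ all lie in the $\sqrt{\delta'}$-neighborhood of $\cset$ (the induction carried out in the proof of Lemma~\ref{lem:inexact_agd_convergence}), the only term that is not $O(\sqrt{\delta'})$ is the batched gradient $\nabla^r F(\mY^{k+1})$. I would control it pathwise by $\norm{\nabla^r F(\mY^{k+1})}\le\sqrt n\,M_\xi+\Lworst\norm{\mY^{k+1}-\onevec(x^*)^\top}$ and then bound $\E\norm{\mY^{k+1}-\onevec(x^*)^\top}^2$ using $\norm{\mY^{k+1}-\ol\mY^{k+1}}\le\sqrt{\delta'}$ together with the iterate estimate $\E\norm{\ol y^{k+1}-x^*}^2\le\norm{\ol u^0-x^*}^2+\tfrac{2}{\sqrt{\Lav\muav}}\bigl(\tfrac{\sigmaav^2}{4n\Lav r^2}+\delta\bigr)$, which follows from Lemma~\ref{lem:inexact_agd_convergence} and the coefficient bound $\sum_{i=1}^k A^i/A^k\le 1+\sqrt{\Lav/\muav}$ of Lemma~\ref{lem:Ak_properties}. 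Collecting these pieces gives exactly $\E\norm{\mV^{k+1}-\ol\mV^{k+1}}^2\le D$ with $D$ as in \eqref{eq:def_sqrt_D}; this step is word for word the one in Lemma~\ref{lem:consensus_iters_strongly_convex}.

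Then I would condition on the entire history up to the consensus call, so that $\mV^{k+1}$ is deterministic and the subroutine introduces no new randomness. The Chebyshev contraction above gives, pointwise, $\norm{\mU^{k+1}-\ol\mU^{k+1}}^2\le(1-\sqrt{1-\rho})^{2T}\norm{\mV^{k+1}-\ol\mV^{k+1}}^2$, and taking total expectation yields $\E\norm{\mU^{k+1}-\ol\mU^{k+1}}^2\le(1-\sqrt{1-\rho})^{2T}D$. Using $1-\sqrt{1-\rho}\le e^{-\sqrt{1-\rho}}$, the right-hand side is at most $\delta'$ once $2T\sqrt{1-\rho}\ge\log(D/\delta')$, i.e. $T\ge\tfrac12\sqrt{\chi}\log(D/\delta')$, which is implied by the stated choice $T=\sqrt{\chi}\log\tfrac{D}{\delta'}$, giving the claimed $\delta'$-accuracy at step $k+1$.

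The only place that requires genuine work is, exactly as in the time-varying case, the uniform-in-$k$ estimate $\E\norm{\mV^{k+1}-\ol\mV^{k+1}}^2\le D$: it has to combine the hypothesis on consensus accuracy, the worst-case constants $M_\xi$ and $\Lworst$ (used to dominate the stochastic batched gradient pathwise, so that the bound survives taking expectations), and the convergence guarantees of Lemmas~\ref{lem:inexact_agd_convergence} and~\ref{lem:Ak_properties}. Once this bound is in place the static case is actually easier than the time-varying one, since the per-step Chebyshev factor $1-\sqrt{1-\rho}$ is no larger than $(1-\lambda)^{1/\tau}$; this is precisely why $\tfrac{\tau}{2\lambda}$ in Lemma~\ref{lem:consensus_iters_strongly_convex} is replaced by $\sqrt{\chi}$ here.
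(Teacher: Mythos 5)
Your proposal is correct and matches the paper's intent exactly: the paper proves this lemma only by remarking it follows ``in the same way'' as Lemma~\ref{lem:consensus_iters_strongly_convex}, and your argument does precisely that --- reuse the bound $\E\norm{\mV^{k+1}-\ol\mV^{k+1}}\le\sqrt{D}$ with $D$ from \eqref{eq:def_sqrt_D}, and replace the contraction factor $(1-\lambda)^{T/\tau}$ by the Chebyshev-accelerated factor $(1-\sqrt{1-\rho})^{T}$ with $\chi=1/(1-\rho)$, which yields the stated $T=\sqrt{\chi}\log(D/\delta')$. Only minor cosmetic deviations (e.g.\ the $\sqrt{n}M_\xi$ versus $nM_\xi$ bound and the dropped $\Lav/\muav$ factor in the intermediate estimate of $\E\norm{\ol y^{k+1}-x^*}^2$) occur, and they do not affect the conclusion.
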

	}
	
	\subsubsection{Putting the proof together}
	
	\rev{We derive the expressions for $r$ and $\delta$ to meet the requirement $\E f(\ol x^k) - f(x^*) \le \eps$ according to \eqref{eq:inexact_agd_function_residual}. This is done by combining the results of Lemmas \ref{lem:Ak_properties}, \ref{lem:consensus_iters_strongly_convex}, \ref{lem:consensus_iters_strongly_convex_fixed}.} The details are given in Appendix \ref{app:total_iterations_strongly_convex}.

	
	
	
	
	
	

	\section{Numerical tests}
	
	We run Algorithm \ref{alg:decentralized_agd} on L2-regularized logistic regression problem:
	\begin{align*}
	f(x) = \frac{1}{m}\sum\limits_{i=1}^m \log\left(1 + \exp(-b_i\angles{a_i, x})\right) + \frac{\theta}{2}\norm{x}^2.
	\end{align*}
	Here $a_1,\ldots,a_m\in\R^d$ are entries of the dataset, $b_1,\ldots,b_m\in\{-1, 1\}$ denote class labels and $\theta > 0$ is a penalty coefficient. Data points $(a_i, b_i)$ are distributed among the computational nodes in the network.
	
	We use LIBSVM datasets \cite{Chang2011} to run our experiments. Work of Algorithm \ref{alg:decentralized_agd} is simulated on a9a data-set with different settings for batch-size $r$ and number of consensus iterations $T$. The random geometric graph has $20$ nodes. \rev{We compare the performance of Algorithm 1 with DSGD \cite{fallah2019robust,olshevsky2019asymptotic,olshevsky2019non}}.
	
	\begin{figure}[H]
		\includegraphics[width=0.5\textwidth]{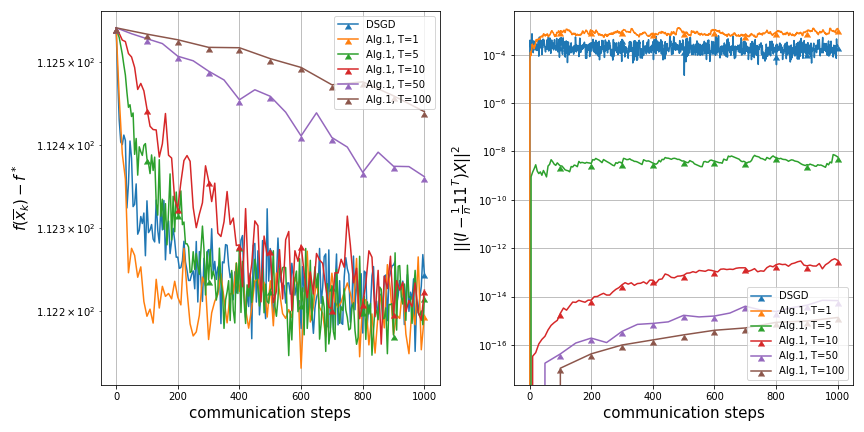}
		\caption{Random geometric graph with 20 nodes; batch size $r = 10$}
	\end{figure}

	
	We observe a tradeoff between consensus accuracy and convergence speed in function value. A large number of consensus steps results in more accurate consensus and slower convergence, and vice versa. This tradeoff is present for different batch sizes.

	\section{Conclusion}
	
	\pd{We propose an accelerated distributed optimization algorithm for stochastic optimization problems in two settings: time-varying graphs and static graphs. For the latter setting we achieve the full acceleration and our method achieves lower bounds both for the communication and oracle per node complexity. }
	
	\pd{
		Our approach is based on accelerated gradient method with stochastic inexact oracle which makes it 
		generic with many possible extensions. In particular, we focus on a specific case of strongly convex smooth functions, but the possible extensions include non-strongly convex and/or non-smooth functions that can be covered by such inexact oracles \cite{devolder2014first,kamzolov2020universal}. Further, we believe that our results can be extended for composite optimization problems, zeroth-order optimization methods \cite{gorbunov2018accelerated,vorontsova2019accelerated,dvurechensky2021accelerated}, and distributed algorithms for saddle-point problems \cite{gasnikov2021accelerated} and variational inequalities.}
	

	\bibliographystyle{IEEEtran.bst}
	\bibliography{references,auxiliary}

\begin{thebibliography}{10}
\providecommand{\url}[1]{#1}
\csname url@rmstyle\endcsname
\providecommand{\newblock}{\relax}
\providecommand{\bibinfo}[2]{#2}
\providecommand\BIBentrySTDinterwordspacing{\spaceskip=0pt\relax}
\providecommand\BIBentryALTinterwordstretchfactor{4}
\providecommand\BIBentryALTinterwordspacing{\spaceskip=\fontdimen2\font plus
\BIBentryALTinterwordstretchfactor\fontdimen3\font minus
  \fontdimen4\font\relax}
\providecommand\BIBforeignlanguage[2]{{%
\expandafter\ifx\csname l@#1\endcsname\relax
\typeout{** WARNING: IEEEtran.bst: No hyphenation pattern has been}%
\typeout{** loaded for the language `#1'. Using the pattern for}%
\typeout{** the default language instead.}%
\else
\language=\csname l@#1\endcsname
\fi
#2}}

\bibitem{bor82}
V.~Borkar and P.~P. Varaiya, ``Asymptotic agreement in distributed
  estimation,'' \emph{IEEE Transactions on Automatic Control}, vol.~27, no.~3,
  pp. 650--655, 1982.

\bibitem{tsi84}
J.~N. Tsitsiklis and M.~Athans, ``Convergence and asymptotic agreement in
  distributed decision problems,'' \emph{IEEE Transactions on Automatic
  Control}, vol.~29, no.~1, pp. 42--50, 1984.

\bibitem{deg74}
M.~H. DeGroot, ``Reaching a consensus,'' \emph{Journal of the American
  Statistical Association}, vol.~69, no. 345, pp. 118--121, 1974.

\bibitem{xia06}
L.~Xiao and S.~Boyd, ``{Optimal scaling of a gradient method for distributed
  resource allocation},'' \emph{Journal of Optimization Theory and
  Applications}, vol. 129, no.~3, pp. 469--488, 2006.

\bibitem{rab04}
M.~Rabbat and R.~Nowak, ``Decentralized source localization and tracking
  wireless sensor networks,'' in \emph{Proceedings of the IEEE International
  Conference on Acoustics, Speech, and Signal Processing}, vol.~3, 2004, pp.
  921--924.

\bibitem{ram2009distributed}
S.~S. Ram, V.~V. Veeravalli, and A.~Nedic, ``Distributed non-autonomous power
  control through distributed convex optimization,'' in \emph{IEEE INFOCOM
  2009}.\hskip 1em plus 0.5em minus 0.4em\relax IEEE, 2009, pp. 3001--3005.

\bibitem{kra13}
T.~Kraska, A.~Talwalkar, J.~C. Duchi, R.~Griffith, M.~J. Franklin, and M.~I.
  Jordan, ``Mlbase: A distributed machine-learning system.'' in \emph{CIDR},
  vol.~1, 2013, pp. 2--1.

\bibitem{ned17e}
A.~Nedi{\'c}, A.~Olshevsky, and C.~A. Uribe, ``Distributed learning for
  cooperative inference,'' \emph{arXiv preprint arXiv:1704.02718}, 2017.

\bibitem{nedic2017fast}
------, ``Fast convergence rates for distributed non-bayesian learning,''
  \emph{IEEE Transactions on Automatic Control}, vol.~62, no.~11, pp.
  5538--5553, 2017.

\bibitem{uribe2018distributed}
C.~A. {Uribe}, D.~{Dvinskikh}, P.~{Dvurechensky}, A.~{Gasnikov}, and
  A.~{Nedi\'c}, ``Distributed computation of {W}asserstein barycenters over
  networks,'' in \emph{2018 IEEE Conference on Decision and Control (CDC)},
  2018, pp. 6544--6549.

\bibitem{kroshnin2019complexity}
A.~Kroshnin, N.~Tupitsa, D.~Dvinskikh, P.~Dvurechensky, A.~Gasnikov, and
  C.~Uribe, ``On the complexity of approximating {W}asserstein barycenters,''
  in \emph{Proceedings of the 36th International Conference on Machine
  Learning}.\hskip 1em plus 0.5em minus 0.4em\relax PMLR, 2019, pp. 3530--3540.

\bibitem{ivanova2020composite}
\BIBentryALTinterwordspacing
A.~Ivanova, P.~Dvurechensky, A.~Gasnikov, and D.~Kamzolov, ``Composite
  optimization for the resource allocation problem,'' \emph{Optimization
  Methods and Software}, 2020. [Online]. Available:
  \url{https://doi.org/10.1080/10556788.2020.1712599}
\BIBentrySTDinterwordspacing

\bibitem{bot10}
L.~Bottou, ``Large-scale machine learning with stochastic gradient descent,''
  in \emph{Proceedings of COMPSTAT'2010}.\hskip 1em plus 0.5em minus
  0.4em\relax Springer, 2010, pp. 177--186.

\bibitem{boy11}
S.~Boyd, N.~Parikh, E.~Chu, B.~Peleato, and J.~Eckstein, ``Distributed
  optimization and statistical learning via the alternating direction method of
  multipliers,'' \emph{Foundations and Trends{\textregistered} in Machine
  Learning}, vol.~3, no.~1, pp. 1--122, 2011.

\bibitem{aba16}
M.~Abadi, A.~Agarwal, P.~Barham, E.~Brevdo, Z.~Chen, C.~Citro, G.~S. Corrado,
  A.~Davis, J.~Dean, M.~Devin, \emph{et~al.}, ``Tensorflow: Large-scale machine
  learning on heterogeneous distributed systems.'' in \emph{Conf. on Language
  Resources and Evaluation (LREC’08)}, 2016, pp. 3243--3249.

\bibitem{ned16w}
A.~Nedi{\'c}, A.~Olshevsky, and W.~Shi, ``Achieving geometric convergence for
  distributed optimization over time-varying graphs,'' \emph{SIAM Journal on
  Optimization}, vol.~27, no.~4, pp. 2597--2633, 2017.

\bibitem{ned15}
A.~Nedi{\'c}, A.~Olshevsky, and C.~A. Uribe, ``{Fast convergence rates for
  distributed \mbox{non-Bayesian} learning},'' \emph{IEEE Transactions on
  Automatic Control}, vol.~62, no.~11, pp. 5538--5553, Nov 2017.

\bibitem{ned09}
A.~Nedi\'{c}, A.~Olshevsky, A.~Ozdaglar, and J.~N. Tsitsiklis, ``On distributed
  averaging algorithms and quantization effects,'' \emph{IEEE Transactions on
  Automatic Control}, vol.~54, no.~11, pp. 2506--2517, 2009.

\bibitem{ram10}
S.~S. Ram, A.~Nedi{\'c}, and V.~V. Veeravalli, ``Distributed stochastic
  subgradient projection algorithms for convex optimization,'' \emph{Journal of
  Optimization Theory and Applications}, vol. 147, no.~3, pp. 516--545, 2010.

\bibitem{daneshmand2021newton}
A.~Daneshmand, G.~Scutari, P.~Dvurechensky, and A.~Gasnikov, ``Newton method
  over networks is fast up to the statistical precision,'' in \emph{Proceedings
  of the 38th International Conference on Machine Learning}, vol. 139.\hskip
  1em plus 0.5em minus 0.4em\relax PMLR, 2021, pp. 2398--2409.

\bibitem{scaman2017optimal}
K.~Scaman, F.~Bach, S.~Bubeck, Y.~T. Lee, and L.~Massouli{\'e}, ``Optimal
  algorithms for smooth and strongly convex distributed optimization in
  networks,'' in \emph{International Conference on Machine Learning}, 2017, pp.
  3027--3036.

\bibitem{ghadimi2013optimal}
S.~Ghadimi and G.~Lan, ``Optimal stochastic approximation algorithms for
  strongly convex stochastic composite optimization, ii: Shrinking procedures
  and optimal algorithms,'' \emph{SIAM Journal on Optimization}, vol.~23,
  no.~4, pp. 2061--2089, 2013.

\bibitem{dvurechensky2018parallel}
P.~E. Dvurechensky, A.~V. Gasnikov, and A.~A. Lagunovskaya, ``Parallel
  algorithms and probability of large deviation for stochastic convex
  optimization problems,'' \emph{Numerical Analysis and Applications}, vol.~11,
  no.~1, pp. 33--37, 2018.

\bibitem{Zhang2018}
Y.~Zhang and L.~Xiao, \emph{Communication-Efficient Distributed Optimization of
  Self-concordant Empirical Loss}.\hskip 1em plus 0.5em minus 0.4em\relax Cham:
  Springer International Publishing, 2018, pp. 289--341.

\bibitem{dvurechensky2021hyperfast}
P.~Dvurechensky, D.~Kamzolov, A.~Lukashevich, S.~Lee, E.~Ordentlich, C.~A.
  Uribe, and A.~Gasnikov, ``Hyperfast second-order local solvers for efficient
  statistically preconditioned distributed optimization,''
  \emph{arXiv:2102.08246}, 2021.

\bibitem{agafonov2021accelerated}
A.~Agafonov, P.~Dvurechensky, G.~Scutari, A.~Gasnikov, D.~Kamzolov,
  A.~Lukashevich, and A.~Daneshmand, ``An accelerated second-order method for
  distributed stochastic optimization,'' in \emph{2021 60th IEEE Conference on
  Decision and Control (CDC)}, 2021.

\bibitem{dvinskikh2019decentralized}
D.~Dvinskikh and A.~Gasnikov, ``Decentralized and parallel primal and dual
  accelerated methods for stochastic convex programming problems,''
  \emph{Journal of Inverse and Ill-posed Problems}, vol.~29, no.~3, pp.
  385--405, 2021.

\bibitem{Nemirovskii1983}
A.~Nemirovskii and Yudin, \emph{Problem Complexity and Method Efficiency in
  Optimization}.\hskip 1em plus 0.5em minus 0.4em\relax Wiley, 1983.

\bibitem{nesterov1983method}
Y.~Nesterov, ``A method of solving a convex programming problem with
  convergence rate $o(1/k^2)$,'' \emph{Soviet Mathematics Doklady}, vol.~27,
  no.~2, pp. 372--376, 1983.

\bibitem{nesterov2020primal-dual}
\BIBentryALTinterwordspacing
Y.~Nesterov, A.~Gasnikov, S.~Guminov, and P.~Dvurechensky, ``Primal-dual
  accelerated gradient methods with small-dimensional relaxation oracle,''
  \emph{Optimization Methods and Software}, pp. 1--28, 2020. [Online].
  Available: \url{https://doi.org/10.1080/10556788.2020.1731747}
\BIBentrySTDinterwordspacing

\bibitem{guminov2019accelerated}
S.~V. Guminov, Y.~E. Nesterov, P.~E. Dvurechensky, and A.~V. Gasnikov,
  ``Accelerated primal-dual gradient descent with linesearch for convex,
  nonconvex, and nonsmooth optimization problems,'' \emph{Doklady Mathematics},
  vol.~99, no.~2, pp. 125--128, 2019.

\bibitem{fallah2019robust}
A.~Fallah, M.~Gurbuzbalaban, A.~Ozdaglar, U.~Simsekli, and L.~Zhu, ``Robust
  distributed accelerated stochastic gradient methods for multi-agent
  networks,'' \emph{arXiv preprint arXiv:1910.08701}, 2019.

\bibitem{dvinskikh2019primal}
D.~Dvinskikh, E.~Gorbunov, A.~Gasnikov, P.~Dvurechensky, and C.~A. Uribe, ``On
  primal and dual approaches for distributed stochastic convex optimization
  over networks,'' in \emph{2019 IEEE 58th Conference on Decision and Control
  (CDC)}, 2019, pp. 7435--7440.

\bibitem{olshevsky2019asymptotic}
A.~Olshevsky, I.~C. Paschalidis, and S.~Pu, ``Asymptotic network independence
  in distributed optimization for machine learning,'' \emph{arXiv preprint
  arXiv:1906.12345}, 2019.

\bibitem{olshevsky2019non}
------, ``A non-asymptotic analysis of network independence for distributed
  stochastic gradient descent,'' \emph{arXiv:1906.02702}, 2019.

\bibitem{Nedic2009}
A.~Nedic and A.~Ozdaglar, ``Distributed subgradient methods for multi-agent
  optimization,'' \emph{IEEE Transactions on Automatic Control}, vol.~54,
  no.~1, pp. 48--61, 2009.

\bibitem{shi2015extra}
W.~Shi, Q.~Ling, G.~Wu, and W.~Yin, ``Extra: An exact first-order algorithm for
  decentralized consensus optimization,'' \emph{SIAM Journal on Optimization},
  vol.~25, no.~2, pp. 944--966, 2015.

\bibitem{rogozin2019projected}
A.~Rogozin and A.~Gasnikov, ``Projected gradient method for decentralized
  optimization over time-varying networks,'' \emph{arXiv:1911.08527}, 2019.

\bibitem{Qu2017}
G.~{Qu} and N.~{Li}, ``Accelerated distributed nesterov gradient descent,''
  \emph{2016 54th Annual Allerton Conference on Communication, Control, and
  Computing}, 2016.

\bibitem{ye2020multi}
H.~Ye, L.~Luo, Z.~Zhou, and T.~Zhang, ``Multi-consensus decentralized
  accelerated gradient descent,'' \emph{arXiv preprint arXiv:2005.00797}, 2020.

\bibitem{li2018sharp}
H.~Li, C.~Fang, W.~Yin, and Z.~Lin, ``A sharp convergence rate analysis for
  distributed accelerated gradient methods,'' \emph{arXiv:1810.01053}, 2018.

\bibitem{Jakovetic}
D.~Jakovetic, ``A unification and generalization of exact distributed first
  order methods,'' \emph{IEEE Transactions on Signal and Information Processing
  over Networks}, pp. 31--46, 2019.

\bibitem{dvu2018}
P.~Dvurechenskii, D.~Dvinskikh, A.~Gasnikov, C.~Uribe, and A.~Nedich,
  ``Decentralize and randomize: Faster algorithm for wasserstein barycenters,''
  in \emph{Advances in Neural Information Processing Systems 31}, 2018, pp.
  10\,783--10\,793.

\bibitem{li2020revisiting}
H.~Li and Z.~Lin, ``Revisiting extra for smooth distributed optimization,''
  \emph{arXiv preprint arXiv:2002.10110}, 2020.

\bibitem{hendrikx2020optimal}
H.~Hendrikx, F.~Bach, and L.~Massoulie, ``An optimal algorithm for
  decentralized finite sum optimization,'' \emph{arXiv:2005.10675}, 2020.

\bibitem{li2020optimal}
H.~Li, Z.~Lin, and Y.~Fang, ``Optimal accelerated variance reduced extra and
  diging for strongly convex and smooth decentralized optimization,''
  \emph{arXiv preprint arXiv:2009.04373}, 2020.

\bibitem{tang2019practicality}
J.~Tang, K.~Egiazarian, M.~Golbabaee, and M.~Davies, ``The practicality of
  stochastic optimization in imaging inverse problems,''
  \emph{arXiv:1910.10100}, 2019.

\bibitem{Wu2017}
X.~Wu and J.~Lu, ``Fenchel dual gradient methods for distributed convex
  optimization over time-varying networks,'' in \emph{2017 IEEE 56th Annual
  Conference on Decision and Control (CDC)}, 2017, pp. 2894--2899.

\bibitem{Zhang2017}
G.~Zhang and R.~Heusdens, ``Distributed optimization using the primal-dual
  method of multipliers,'' \emph{IEEE Transactions on Signal and Information
  Processing over Networks}, vol.~4, no.~1, pp. 173--187, 2018.

\bibitem{uribe2020dual}
C.~A. Uribe, S.~Lee, A.~Gasnikov, and A.~Nedi{\'c}, ``A dual approach for
  optimal algorithms in distributed optimization over networks,''
  \emph{Optimization Methods and Software}, pp. 1--40, 2020.

\bibitem{Nedic2017achieving}
A.~Nedić, A.~Olshevsky, and W.~Shi, ``Achieving geometric convergence for
  distributed optimization over time-varying graphs,'' \emph{SIAM Journal on
  Optimization}, vol.~27, no.~4, pp. 2597--2633, 2017.

\bibitem{Pu2018}
S.~Pu, W.~Shi, J.~Xu, and A.~Nedich, ``A push-pull gradient method for
  distributed optimization in networks,'' \emph{2018 IEEE Conference on
  Decision and Control (CDC)}, pp. 3385--3390, 2018.

\bibitem{Maros2018}
M.~Maros and J.~Jald{\'e}n, ``Panda: A dual linearly converging method for
  distributed optimization over time-varying undirected graphs,'' \emph{2018
  IEEE Conference on Decision and Control (CDC)}, pp. 6520--6525, 2018.

\bibitem{koloskova2020unified}
A.~Koloskova, N.~Loizou, S.~Boreiri, M.~Jaggi, and S.~U. Stich, ``A unified
  theory of decentralized sgd with changing topology and local updates,''
  \emph{arXiv preprint arXiv:2003.10422}, 2020.

\bibitem{auzinger2011iterative}
A.~Wien, \emph{Iterative solution of large linear systems}.\hskip 1em plus
  0.5em minus 0.4em\relax Lecture Notes, TU Wien, 2011.

\bibitem{scaman2018optimal}
K.~Scaman, F.~Bach, S.~Bubeck, L.~Massouli{\'e}, and Y.~T. Lee, ``Optimal
  algorithms for non-smooth distributed optimization in networks,'' in
  \emph{Advances in Neural Information Processing Systems}, 2018, pp.
  2740--2749.

\bibitem{stonyakin2020inexact}
\BIBentryALTinterwordspacing
F.~Stonyakin, A.~Tyurin, A.~Gasnikov, P.~Dvurechensky, A.~Agafonov,
  D.~Dvinskikh, M.~Alkousa, D.~Pasechnyuk, S.~Artamonov, and V.~Piskunova,
  ``Inexact model: A framework for optimization and variational inequalities,''
  \emph{Optimization Methods and Software}, 2021. [Online]. Available:
  \url{https://doi.org/10.1080/10556788.2021.1924714}
\BIBentrySTDinterwordspacing

\bibitem{devolder2013first}
O.~Devolder, F.~Glineur, and Y.~Nesterov, ``First-order methods with inexact
  oracle: the strongly convex case,'' \emph{CORE DP 2013/16}, 2013.

\bibitem{dvurechensky2016stochastic}
P.~Dvurechensky and A.~Gasnikov, ``Stochastic intermediate gradient method for
  convex problems with stochastic inexact oracle,'' \emph{Journal of
  Optimization Theory and Applications}, vol. 171, no.~1, pp. 121--145, 2016.

\bibitem{gasnikov2016stochasticInter}
A.~V. Gasnikov and P.~E. Dvurechensky, ``Stochastic intermediate gradient
  method for convex optimization problems,'' \emph{Doklady Mathematics},
  vol.~93, no.~2, pp. 148--151, Mar 2016.

\bibitem{rogozin2020towards}
A.~Rogozin, V.~Lukoshkin, A.~Gasnikov, D.~Kovalev, and E.~Shulgin, ``Towards
  accelerated rates for distributed optimization over time-varying networks,''
  \emph{arXiv preprint arXiv:2009.11069}, 2020.

\bibitem{stonyakin2019gradient}
F.~S. Stonyakin, D.~Dvinskikh, P.~Dvurechensky, A.~Kroshnin, O.~Kuznetsova,
  A.~Agafonov, A.~Gasnikov, A.~Tyurin, C.~A. Uribe, D.~Pasechnyuk, and
  S.~Artamonov, ``Gradient methods for problems with inexact model of the
  objective,'' in \emph{Mathematical Optimization Theory and Operations
  Research}, M.~Khachay, Y.~Kochetov, and P.~Pardalos, Eds.\hskip 1em plus
  0.5em minus 0.4em\relax Cham: Springer International Publishing, 2019, pp.
  97--114.

\bibitem{Chang2011}
C.-C. Chang and C.-J. Lin, ``Libsvm: a library for support vector machines,''
  \emph{ACM transactions on intelligent systems and technology (TIST)}, vol.~2,
  no.~3, p.~27, 2011.

\bibitem{devolder2014first}
O.~Devolder, F.~Glineur, and Y.~Nesterov, ``First-order methods of smooth
  convex optimization with inexact oracle,'' \emph{Mathematical Programming},
  vol. 146, no. 1-2, pp. 37--75, 2014.

\bibitem{kamzolov2020universal}
\BIBentryALTinterwordspacing
D.~Kamzolov, P.~Dvurechensky, and A.~V. Gasnikov, ``Universal intermediate
  gradient method for convex problems with inexact oracle,'' \emph{Optimization
  Methods and Software}, 2020. [Online]. Available:
  \url{https://doi.org/10.1080/10556788.2019.1711079}
\BIBentrySTDinterwordspacing

\bibitem{gorbunov2018accelerated}
E.~Gorbunov, P.~Dvurechensky, and A.~Gasnikov, ``An accelerated method for
  derivative-free smooth stochastic convex optimization,''
  \emph{arXiv:1802.09022}, 2018.

\bibitem{vorontsova2019accelerated}
E.~A. Vorontsova, A.~V. Gasnikov, E.~A. Gorbunov, and P.~E. Dvurechenskii,
  ``Accelerated gradient-free optimization methods with a non-euclidean
  proximal operator,'' \emph{Automation and Remote Control}, vol.~80, no.~8,
  pp. 1487--1501, 2019.

\bibitem{dvurechensky2021accelerated}
P.~Dvurechensky, E.~Gorbunov, and A.~Gasnikov, ``An accelerated directional
  derivative method for smooth stochastic convex optimization,'' \emph{European
  Journal of Operational Research}, vol. 290, no.~2, pp. 601 -- 621, 2021.

\bibitem{gasnikov2021accelerated}
A.~V. Gasnikov, D.~M. Dvinskikh, P.~E. Dvurechensky, D.~I. Kamzolov, V.~V.
  Matyukhin, D.~A. Pasechnyuk, N.~K. Tupitsa, and A.~V. Chernov, ``Accelerated
  meta-algorithm for convex optimization problems,'' \emph{Computational
  Mathematics and Mathematical Physics}, vol.~61, no.~1, pp. 17--28, 2021.

\end{thebibliography}
	
	\newpage
	\onecolumn
	\section*{APPENDIX}
\parindent0pt

\subsection{Proof of Lemma \ref{lem:inexact_oracle}}\label{app:inexact_oracle}

\begin{proof}
	The first statement is proved in Lemma 2.1 of \cite{rogozin2020towards}. For the second statement, we have
	\begin{align*}
	\mathbb{E} \tild{g}_{\delta, L, \mu}(\ol x, \mX) 
	&= \frac{1}{n}\sum_{i=1}^n  \frac{1}{r} \sum_{j=1}^r \mathbb{E} \nabla\bbf_i(x_i,\xi_i^j)
	= \frac{1}{n}\sum_{i=1}^n  \frac{1}{r} \sum_{j=1}^r \nabla f_i(x_i)
	= \frac{1}{n}\sum_{i=1}^n \nabla f_i(x_i) = g_{\delta, L, \mu}(\ol x, \mX).
	\end{align*}
	It remains to show \eqref{eq:tilde_g_properties_2}.
	\begin{align*}
	&\E \norm{\tild{g}_{\delta, L, \mu}(\ol x, \mX) - g_{\delta, L, \mu}(\ol x, \mX)}^2
	\leq \E \norm{ \frac{1}{n}\sum_{i=1}^n  \frac{1}{r} \sum_{j=1}^r \nabla\bbf_i(x_i,\xi_i^j) -  \frac{1}{n}\sum_{i=1}^n \nabla f_i(x_i) }^2 \\
	&\qquad \leq \frac{1}{n^2} \sum_{i=1}^n \E \norm{ \frac{1}{r} \sum_{j=1}^r \nabla\bbf_i(x_i,\xi_i^j) - \nabla f_i(x_i) }^2
	\leq \frac{1}{n^2 r^2} \sum_{i=1}^n \sum_{j=1}^r \E \norm{ \nabla\bbf_i(x_i,\xi_i^j) - \nabla f_i(x_i) }^2 \\
	&\qquad \leq \frac{\sum_{i=1}^n\sigma^2_i}{n^2r}\pd{=
		\frac{\sigma_g^2}{nr}}.
	\end{align*}
	The last inequality follows directly from (\ref{stoch_assumption_on_variance}).
\end{proof}

\subsection{Proof of Theorem \ref{Th:fast_str_conv_adap}}\label{app:fast_str_conv_adap}

For proving the theorem about complexity bounds, we need the following auxiliary Lemma:
\begin{lemma}
\label{lem:opt_subproblem}
Let $\psi$ be convex function. Then for
\begin{equation}
    y = \argmin_{x \in Q} \left( \psi(x) + \beta \norm{x-z}^2 + \gamma \norm{x-u}^2 \right),
\end{equation}
where $\beta > 0$ and $\gamma > 0$, the following is true for any $x \in Q$:
\begin{equation}
    \psi(x) + \beta \norm{x-z}^2 + \gamma \norm{x-u}^2 \geq \psi(y) + \beta \norm{y-z}^2 + \gamma \norm{y-u}^2 + (\beta + \gamma) \norm{x-y}^2.
\end{equation}
\end{lemma}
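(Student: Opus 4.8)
The plan is to use the fact that the objective being minimized is a convex function plus two simple quadratics, hence strongly convex with an explicit modulus, and to combine the first-order optimality condition for minimization over the convex set $Q$ with the exact second-order expansion of the quadratic terms. Write $\Phi(x) := \psi(x) + \beta\norm{x-z}^2 + \gamma\norm{x-u}^2$, so that $y = \argmin_{x\in Q}\Phi(x)$.

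First I would record the exact (not approximate) expansions of the quadratic pieces around $y$: for every $x$,
\begin{align*}
\beta\norm{x-z}^2 &= \beta\norm{y-z}^2 + 2\beta\angles{y-z,\, x-y} + \beta\norm{x-y}^2, \\
\gamma\norm{x-u}^2 &= \gamma\norm{y-u}^2 + 2\gamma\angles{y-u,\, x-y} + \gamma\norm{x-y}^2.
\end{align*}
For the convex term, fix a subgradient $\psi'(y)\in\partial\psi(y)$; convexity gives $\psi(x)\ge \psi(y) + \angles{\psi'(y),\, x-y}$ for all $x$.

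Next I would invoke optimality of $y$ over the convex set $Q$. Since $Q$ is convex and $\Phi$ is convex, $y$ being a minimizer of $\Phi$ on $Q$ is equivalent to $0\in\partial\Phi(y) + N_Q(y)$; because the two quadratics are everywhere differentiable, $\partial\Phi(y) = \partial\psi(y) + 2\beta(y-z) + 2\gamma(y-u)$, so there is a subgradient $\psi'(y)\in\partial\psi(y)$ for which $g := \psi'(y) + 2\beta(y-z) + 2\gamma(y-u)$ satisfies the variational inequality $\angles{g,\, x-y}\ge 0$ for all $x\in Q$. The only care needed is to use \emph{this same} $\psi'(y)$ in the convexity bound above.

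Finally I would add the three relations. The linear terms assemble exactly into $\angles{g,\, x-y}$, the quadratic remainders sum to $(\beta+\gamma)\norm{x-y}^2$, and the constants reassemble $\Phi(y)$, yielding
\begin{align*}
\Phi(x) &\ge \Phi(y) + \angles{g,\, x-y} + (\beta+\gamma)\norm{x-y}^2 \\
&\ge \Phi(y) + (\beta+\gamma)\norm{x-y}^2,
\end{align*}
where the last step uses $\angles{g,\, x-y}\ge 0$. Unfolding the definition of $\Phi$ on both sides gives precisely the claimed inequality. There is no genuine obstacle here; the proof is essentially the strong-convexity-plus-optimality argument, and the only subtleties are stating the constrained optimality condition correctly (equivalently, choosing the $\psi$-subgradient consistently) and keeping the quadratic expansions as exact identities rather than loosening them into inequalities.
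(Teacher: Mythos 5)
Your proof is correct and follows essentially the same route as the paper's: first-order optimality of $y$ for the convex objective combined with the exact quadratic (three-point) expansions of $\beta\norm{x-z}^2$ and $\gamma\norm{x-u}^2$ around $y$. If anything, you state the constrained optimality condition ($0\in\partial\Phi(y)+N_Q(y)$, i.e.\ the variational inequality $\angles{g,x-y}\ge 0$ for $x\in Q$) more carefully than the paper, which writes the subgradient condition as an equality; this is a minor presentational refinement, not a different argument.
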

\begin{proof}
    As $y$ is minimum, the subgradient of function at point $y$ includes $0$:
    \begin{equation*}
        \exists g: g + \beta \nabla_x \norm{x-z}^2 \vert_{x=y} + \gamma \nabla_x \norm{x-u}^2 \vert_{x=y} = 0.
    \end{equation*}
    It holds
    \begin{align}
        \label{eq:opt_subproblem}
        \psi(x) - \psi(y) &\geq \angles{g, x-y} = \angles{\beta \nabla_x \norm{x-z}^2 \vert_{x=y} + \gamma \nabla_x \norm{x-u}^2 \vert_{x=y}, y - x} \\
        &= \angles{2 \beta (y - z) + 2 \gamma (y - u), y - x}
    \end{align}
    and we get that
    \begin{align*}
        2 \angles{y-z,y-x} &= \norm{y}^2 - \norm{z}^2 - 2 \angles{z, y-z}+\norm{x}^2 - \norm{y}^2 - 2 \angles{y, x-y} - \norm{x}^2 + \norm{z}^2 + 2 \angles{z,x-z} \\
        &= \norm{y-z}^2 + \norm{x-y}^2 - \norm{x-z}^2.
    \end{align*}
    After similar manipulations with the $2 \angles{y-u,y-x}$ term and replacing the right part in \eqref{eq:opt_subproblem}, the lemma statement is obtained.
\end{proof}

Now we pass to the proof of Theorem \ref{Th:fast_str_conv_adap} itself.

\begin{proof}
    We begin from the right inequality from \eqref{eq:inexact_oracle_def_devolder}:
    $$f(y) - \cbraces{f_{\delta_{k+1},L,\mu}(x) + \angles{g_{\delta_{k+1},L,\mu}(x), y - x}} \le \frac{L}{2} \norm{y - x}^2 + \delta_{k+1}.$$
    It can be rewritten as
     \begin{align*}
         f(x^{k+1}) &- \cbraces{f_{\delta_{k+1},L,\mu}(y^{k+1}) + \angles{\tild{g}^{\ r_{k+1}}_{\delta_{k+1},L,\mu}(y^{k+1}), x^{k+1} - y^{k+1}}} \\ &\leq \angles{g_{\delta_{k+1},L,\mu}(y^{k+1}) - \tild{g}^{\ r_{k+1}}_{\delta_{k+1},L,\mu}(y^{k+1}), x^{k+1} - y^{k+1}}  + \frac{L}{2} \norm{y^{k+1} - x^{k+1}}^2 + \delta_{k+1}.
     \end{align*}
    The first term in the right hand side can be estimated using Young inequality:
    \begin{align*}
        \angles{g_{\delta_{k+1},L,\mu}(y^{k+1}) - \tild{g}^{\ r_{k+1}}_{\delta_{k+1},L,\mu}(y^{k+1}), x^{k+1} - y^{k+1}} \leq \frac{L}{2} \norm{x^{k+1} - y^{k+1}}^2 + \frac{1}{2L} \norm{g_{\delta_{k+1},L,\mu}(y^{k+1}) - \tild{g}^{\ r_{k+1}}_{\delta_{k+1},L,\mu}(y^{k+1})}^2
    \end{align*}
    The combination of the last two expressions yields
    \begin{align*}
         f(x^{k+1}) \leq f_{\delta_{k+1},L,\mu}(y^{k+1}) + \angles{\tild{g}^{\ r_{k+1}}_{\delta_{k+1},L,\mu}(y^{k+1}), x^{k+1} - y^{k+1}} + L \norm{y^{k+1} - x^{k+1}}^2 + \lambda + \delta_{k+1},
    \end{align*}
    where 
    $$
    \lambda := \frac{1}{2L}\norm{g_{\delta_{k+1},L,\mu}(y^{k+1}) - \tild{g}^{\ r_{k+1}}_{\delta_{k+1},L,\mu}(y^{k+1})}^2
    $$ 
    for brevity. We substitute $x^{k+1}, y^{k+1}$ into several terms by their definitions:
    \begin{align*}
         &f(x^{k+1}) \leq f_{\delta_{k+1},L,\mu}(y^{k+1}) + \angles{\tild{g}^{\ r_{k+1}}_{\delta_{k+1},L,\mu}(y^{k+1}), \frac{\alpha^{k+1} u^{k+1} + A^k x^k}{A^{k+1}} - y^{k+1}} \\ &+ L \norm{\frac{\alpha^{k+1}u^k + A^k x^k}{A^{k+1}} - \frac{\alpha^{k+1} u^{k+1} + A^k x^k}{A^{k+1}}}^2 + \lambda + \delta_{k+1}.
    \end{align*}
    As $A^{k+1} = A^k + \alpha^{k+1}$ by definition and as dot product is convex, we get the following:
    \begin{align*}
        f(x^{k+1}) &\leq \frac{A^k}{A^{k+1}} \left( f_{\delta_{k+1}, L, \mu} (y^{k+1}) + \angles{\tild{g}^{\ r_{k+1}}_{\delta_{k+1}, L, \mu} (y^{k+1}), x^k - y^{k+1} }\right) \\  
        &\quad+ \frac{\alpha^{k+1}}{A^{k+1}} \left( f_{\delta_{k+1}, L, \mu} (y^{k+1}) + \angles{\tild{g}^{\ r_{k+1}}_{\delta_{k+1}, L, \mu} (y^{k+1}), u^{k+1} - y^{k+1}} \right) +  \frac{L (\alpha^{k+1})^2}{(A^{k+1})^2} \norm{u^k - u^{k+1}}^2 + \lambda + \delta_{k+1}.
    \end{align*}
    By definition of $\alpha^{k+1}$ we have:
    \begin{align*}
        f(x^{k+1}) &\leq \frac{A^k}{A^{k+1}} \left( f_{\delta_{k+1}, L, \mu} (y^{k+1}) + \angles{\tild{g}^{\ r_{k+1}}_{\delta_{k+1}, L, \mu} (y^{k+1}), x^k - y^{k+1} }\right) \\  
        &\quad + \frac{\alpha^{k+1}}{A^{k+1}} \left( f_{\delta_{k+1}, L, \mu} (y^{k+1}) + \angles{\tild{g}^{\ r_{k+1}}_{\delta_{k+1}, L, \mu} (y^{k+1}), u^{k+1} - y^{k+1}} \right) +  \frac{1 + A^k \mu}{A^{k+1}} \norm{u^k - u^{k+1}}^2 + \lambda + \delta_{k+1}.
    \end{align*}
    We rewrite that as:
    \begin{align*}
        f(x^{k+1}) &\leq \frac{A^k}{A^{k+1}} \left( f_{\delta_{k+1}, L, \mu} (y^{k+1}) + \angles{g_{\delta_{k+1}, L, \mu} (y^{k+1}), x^k - y^{k+1}} + \angles{\tild{g}^{\ r_{k+1}}_{\delta_{k+1}, L, \mu}(y^{k+1}) - g_{\delta_{k+1}, L, \mu} (y^{k+1}), x^k - y^{k+1}} \right) \\  
        &\quad+ \frac{\alpha^{k+1}}{A^{k+1}} \left( f_{\delta_{k+1}, L, \mu} (y^{k+1}) + \angles{\tild{g}^{\ r_{k+1}}_{\delta_{k+1}, L, \mu} (y^{k+1}), u^{k+1} - y^{k+1}} \right) +  \frac{1 + A^k \mu}{A^{k+1}} \norm{u^k - u^{k+1}}^2 + \lambda + \delta_{k+1}.
    \end{align*}
    Using left part of (\ref{eq:inexact_oracle_def_devolder}), we get:
    \begin{align}
    \label{eq:convergency_proof_intermediate}
        f(x^{k+1}) &\leq \frac{A^k}{A^{k+1}} \left( f
        (x^k) + \angles{\tild{g}^{\ r_{k+1}}_{\delta_{k+1}, L, \mu}(y^{k+1}) - g_{\delta_{k+1}, L, \mu} (y^{k+1}), x^k - y^{k+1}} \right) \\  &+ \frac{\alpha^{k+1}}{A^{k+1}} \left( f_{\delta_{k+1}, L, \mu} (y^{k+1}) + \angles{\tild{g}^{\ r_{k+1}}_{\delta_{k+1}, L, \mu} (y^{k+1}), u^{k+1} - y^{k+1}} \right) +  \frac{1 + A^k \mu}{A^{k+1}} \norm{u^k - u^{k+1}}^2 + \lambda + \delta_{k+1}.
    \end{align}
    From lemma \ref{lem:opt_subproblem} for optimization problem at step 5 in Algorithm \ref{alg:FastAlg2_strong} we have:
    \begin{align*}
        \alpha^{k+1} &\angles{\tild{g}^{\ r_{k+1}}_{\delta_{k+1}, L, \mu}  (y^{k+1}) ,u^{k+1}-y^{k+1}} + (1 + A^k \mu) \norm{u^{k+1} - u^k}^2 + \alpha^{k+1} \mu \norm{u^{k+1} - y^{k+1}}^2 \\
        &+ (1 + A^k \mu + \alpha^{k+1} \mu ) \norm{u^{k+1} - x}^2  \leq \alpha^{k+1} \angles{\tild{g}^{\ r_{k+1}}_{\delta_{k+1}, L, \mu}  (y^{k+1}) ,x-y^{k+1}} \\ &+ (1 + A^k \mu) \norm{x - u^k}^2 +  \alpha^{k+1} \mu \norm{x - y^{k+1}}^2.
    \end{align*}
    As squared norm is always non-negative, we obtain
    \begin{align}
    \label{eq:convergency_proof_lemma}
        \alpha^{k+1} &\angles{\tild{g}^{\ r_{k+1}}_{\delta_{k+1}, L, \mu}  (y^{k+1}) ,u^{k+1}-y^{k+1}} + (1 + A^k \mu) \norm{u^{k+1} - u^k}^2 
        \leq -(1 + A^k \mu + \alpha^{k+1} \mu ) \norm{u^{k+1} - x}^2 \\  &+ \alpha^{k+1} \angles{\tild{g}^{\ r_{k+1}}_{\delta_{k+1}, L, \mu}  (y^{k+1}) ,x-y^{k+1}} + (1 + A^k \mu) \norm{x - u^k}^2 + \alpha^{k+1} \mu \norm{x - y^{k+1}}^2
    \end{align}
    Combining inequalities \eqref{eq:convergency_proof_intermediate} and \eqref{eq:convergency_proof_lemma}, we get:
    \begin{align*}
        A^{k+1} f(x^{k+1}) \leq~&A^k \left( f (x^k) + \angles{\tild{g}^{\ r_{k+1}}_{\delta_{k+1}, L, \mu}(y^{k+1}) - g_{\delta_{k+1}, L, \mu} (y^{k+1}), x^k - y^{k+1}} \right) \\
        &+ \alpha^{k+1} \left( f_{\delta_{k+1}, L, \mu} (y^{k+1}) + \angles{\tild{g}^{\ r_{k+1}}_{\delta_{k+1}, L, \mu}  (y^{k+1}) ,x-y^{k+1}} + \mu \norm{x - y^{k+1}}^2 \right) \\ 
        &+ (1 + A^k \mu) \norm{x - u^k}^2 - (1 + A^k \mu + \alpha^{k+1} \mu ) \norm{u^{k+1} - x}^2 + A^{k+1} \lambda + \delta_{k+1} A^{k+1} \\
        =~&A^k \left( f (x^k) + \angles{\tild{g}^{\ r_{k+1}}_{\delta_{k+1}, L, \mu}(y^{k+1}) - g_{\delta_{k+1}, L, \mu} (y^{k+1}), x^k - y^{k+1}} \right) \\
        &+ \alpha^{k+1} \left( f_{\delta_{k+1}, L, \mu} (y^{k+1}) + \angles{{g}_{\delta_{k+1}, L, \mu}  (y^{k+1}) ,x-y^{k+1}} \right) \\ 
        &+ \alpha^{k+1} \left( \angles{\tild{g}^{\ r_{k+1}}_{\delta_{k+1}, L, \mu}  (y^{k+1}) - {g}_{\delta_{k+1}, L, \mu}  (y^{k+1}) ,x-y^{k+1}} + \mu \norm{x - y^{k+1}}^2 \right) \\ &+ (1 + A^k \mu) \norm{x - u^k}^2 - (1 + A^k \mu + \alpha^{k+1} \mu ) \norm{u^{k+1} - x}^2 + A^{k+1} \lambda + \delta_{k+1} A^{k+1}.
    \end{align*}
    Using the left part of \eqref{eq:inexact_oracle_def_devolder} again results in
    \begin{align*}
        A^{k+1} f(x^{k+1}) \leq &{A^k} \left( f (x^k) + \angles{\tild{g}^{\ r_{k+1}}_{\delta_{k+1}, L, \mu}(y^{k+1}) - g_{\delta_{k+1}, L, \mu} (y^{k+1}), x^k - y^{k+1}} \right) \\
        &+ \alpha^{k+1} \left( f (x)+ \angles{\tild{g}^{\ r_{k+1}}_{\delta_{k+1}, L, \mu}  (y^{k+1}) - {g}_{\delta_{k+1}, L, \mu}  (y^{k+1}) ,x-y^{k+1}} + \right) \\ &+ (1 + A^k \mu) \norm{x - u^k}^2 - (1 + A^k \mu + \alpha^{k+1} \mu) \norm{u^{k+1} - x}^2 + A^{k+1} \lambda + \delta_{k+1} A^{k+1}.
    \end{align*}
    We can take expectation and we may see that angles terms go zero as $\E \tild{g}^{\ r}_{\delta, L, \mu}(x) = g_{\delta, L, \mu}(x)$ for any $r$, and $\lambda \leq \frac{\sigma^2}{2Lr_{k+1}}$:
    \begin{align*}
        A^{k+1} \E f(x^{k+1}) &- {A^k}  f (x^k) \leq \alpha^{k+1} f (x)  \\ &+ (1 + A^k \mu) \norm{x - u^k}^2 - (1 + A^{k+1} \mu ) \E \norm{u^{k+1} - x}^2 + \frac{\sigma^2 A^{k+1}}{2Lr_{k+1}} + \delta_{k+1} A^{k+1}.
    \end{align*}
    Now we should pay attention to the fact that the expectation is \textit{conditional} because we consider $x^k$ and other $k$-th variables known before the iteration:
    \begin{align*}
        A^{k+1} \E \left[ f(x^{k+1}) | x^k, \ldots, x^1 \right] &- {A^k}  f (x^k) \leq \alpha^{k+1} f (x)  + (1 + A^k \mu) \norm{x - u^k}^2 \\ &- (1 + A^{k+1} \mu ) \E \left[ \norm{u^{k+1} - x}^2 | x^k, \ldots, x^1 \right] + \frac{\sigma^2 A^{k+1}}{2Lr_{k+1}} + \E[\delta_{k+1} | x^k, \ldots, x^1] A^{k+1}.
    \end{align*}
    If we take $x = x^*$, write these inequalities for all $k$ from $0$ to $N-1$ and sum up all of them, we will get the following:
    \begin{align*}
    \sum_{i=1}^N A^i &\mathbb{E} \left[ f(x^i) | x^i, \ldots, x^1 \right] \leq \sum_{i=0}^{N-1} A^i f(x^i) + \sum_{i=1}^N \alpha_i f(x^*) + \\
    &+ \sum_{i=0}^{N-1} (1 + A^i \mu) \norm{u^i - x^*}^2 - \sum_{i=1}^N (1 + A^i \mu) \mathbb{E} \left[ \norm{u^i - x^*}^2 | x^i, \ldots, x^1 \right] +  \sum_{i=1}^N A_i\left( \frac{\sigma^2}{2Lr_{i}} + \E[\delta_i | x^i, \ldots, x^1] \right).
    \end{align*}
    Next, we use the law of total expectation $N$ times and get rid of conditional expectations, and after that get rid of similar terms:
    \begin{gather}
    \label{eq:agd_bound}
    \mathbb{E} A^N f(x^N) \leq A^N f(x^*) + \norm{u^0 - x^*}^2 - (1 + A^N \mu) \mathbb{E} \norm{u^N - x^*}^2 + 
    \sum_{i=1}^N A_i \left( \frac{\sigma^2}{2Lr_{i}} + \E\delta_i \right)
    \end{gather}
    Here we also recall that $A_0 = \alpha_0 = 0, \sum_{i=1}^N \alpha_i = A_N$.
    \par Finally, we get
    $$ \mathbb{E} f(x^N) - f(x^*) \leq \frac{1}{A^N} \left( \norm{u^0 - x^*}^2 +  \sum_{i=1}^N A^i \left( \frac{\sigma^2}{2Lr_{i}} + \E\delta_i \right) \right).
    $$
    The second inequality is obtained from \eqref{eq:agd_bound} and the fact that $f(x) \geq f(x^*)$. 
\end{proof}

\subsection{Proof of Lemma \ref{lem:Ak_properties}}\label{app:Ak_properties}

\begin{proof}
In view of definition of sequence $\alpha^{k+1}$, we have:
\begin{align*}
A^N &\leq A^N(1 + \mu A^{N-1}) = L(A^N - A^{N-1})^2\\
&\leq L(\sqrt{A^N} - \sqrt{A^{N-1}})^2(\sqrt{A^N} + \sqrt{A^{N-1}})^2 \leq 4L^{N} A^N (\sqrt{A^N} - \sqrt{A^{N-1}})^2.
\end{align*}
For the case when $\mu > 0$ we obtain:
\begin{align*}
\mu A^{N-1} A^{N} \leq A^N(1 + \mu A^{N-1}) \leq 4L A^N (\sqrt{A^N} - \sqrt{A^{N-1}})^2.
\end{align*}
From the fact that $A^1 = 1 / L$ and the last inequality we can show that
\begin{align*}
\sqrt{A^N} \geq \left(1 + \frac{1}{2}\sqrt{\frac{\mu}{L}}\right)\sqrt{A^{N-1}} \geq \frac{1}{\sqrt{L}}\left(1 + \frac{1}{2}\sqrt{\frac{\mu}{L}}\right)^{(N-1)}.
\end{align*}
For the second statement, we recall the proof of Lemma A.1 in \cite{rogozin2020towards}.
Update rule for $A^k$ writes as
	\begin{align}\label{eq:coef_sequence}
	1 + \mu A^k = \frac{L(\alpha^{k+1})^2}{A^{k+1}},~ A^{k} = \sum_{i=0}^k \alpha^{i},~ \alpha^0 = 0.
	\end{align}
	A sequence $\braces{B^k}_{k=0}^\infty$ with a similar update rule is studied in \cite{devolder2013first}.
	\begin{align}\label{eq:nesterov_coef_sequence}
	L + \mu B^k = \frac{L (\beta^{k+1})^2}{B^{k+1}},~ B^k = \sum_{i=0}^k \beta^i,~ \beta^0 = 1,
	\end{align}
	and for sequence $\braces{B^k}_{k=0}^\infty$ it is shown $\frac{\sum_{i=0}^k B^i}{B^k}\le 1 + \sqrt{L / \mu}$.
	Dividing \eqref{eq:nesterov_coef_sequence} by L yields
	\begin{align*}
	1 + \mu(B^k / L) = \frac{L (\beta^{k+1} / L)^2}{(B^{k+1} / L)},
	\end{align*}
	which means that update rule for $B^k / L$ is equivalent to \eqref{eq:coef_sequence}. Since $A^1 = 1 / L = B^0 / L$, it holds $A^{k+1} = B^k / L,~ k\ge 0$ and
	\begin{align*}
	\frac{\sum_{i=1}^k A^i}{A^k} = \frac{\sum_{i=0}^{k-1} B^i / L}{B^{k-1} / L} \le 1 + \sqrt\frac{L}{\mu}.
	\end{align*}
\end{proof}

\subsection{Proof of Lemma \ref{lem:consensus_iters_strongly_convex}}\label{app:consensus_iters_strongly_convex}

\begin{proof}
	The proof follows by revisiting proof of Lemma A.3 in \cite{rogozin2020towards} in stochastic setting. First, note that multiplication by a mixing matrix does not change the average of a vector, i.e. $\frac{1}{n}\onevec\onevec^\top\mX = \frac{1}{n}\onevec\onevec^\top \mW^k \mX~ \forall k\ge 0$. This means $\ol\mU^{k+1} = \ol\mV^{k+1}$.

    Second, let us use the contraction property of mixing matrix sequence $\{\mW^k\}_{k=0}^\infty$. We have
	\begin{align*}
	\E\norm{\mU^{k+1} - \aU^{k+1}}^2 
	&\le (1 - \lambda)^{2(T/\tau)} \E\norm{\mV^{k+1} - \aU^{k+1}}^2 
	\le e^{-2(T/\tau)\lambda} \E\norm{\mV^{k+1} - \aU^{k+1}}^2. \\
	\end{align*}
	Assuming that $\E\norm{\mV^{k+1} - \aU^{k+1}}^2\le D$, we only need $T = \frac{\tau}{2\lambda}\log\frac{D}{\delta'}$ iterations to ensure $\E\norm{\mU^{k+1} - \aU^{k+1}}^2\le\delta'$. In the rest of the proof, we show that $\E\norm{\mV^{k+1} - \aU^{k+1}} = \E\norm{\mV^{k+1} - \aV^{k+1}} \le \sqrt D$.
	
    According to update rule of Algorithm \ref{alg:decentralized_agd}, it holds
	\begin{align*}
	\E\norm{\mV^{k+1} - \ol\mV^{k+1}} 
	&\le \frac{\alpha^{k+1}\mu \E\norm{\mY^{k+1} - \ol\mY^{k+1}}}{1 + A^{k+1}\mu} + \frac{(1 + A^k\mu)\E\norm{\mU^k - \ol\mU^k}}{1 + A^{k+1}\mu} + \frac{\alpha^{k+1}}{1 + A^{k+1}\mu} \E\norm{\nabla^r F(\mY^{k+1})} \\
	&\le \sqrt{\delta'} + \frac{\alpha^{k+1}}{1 + A^{k+1}\mu} \E\norm{\nabla^r F(\mY^{k+1})}.
	\end{align*}
	We estimate $\norm{\nabla^r F(\mY^{k+1})}$ using $\Lworst$-smoothness of $\nabla F$: 
	\begin{align*}
	\norm{\nabla^r F(\mY^{k+1})} 
	&\le \norm{\nabla^r F(\mY^{k+1}) - \nabla^r F(\mX^*)} + \norm{\nabla^r F(\mX^*)} \\
	&\le \Lworst\underbrace{\norm{\mY^{k+1} - \ol\mY^{k+1}}}_{\le\sqrt{\delta'}} + \Lworst\underbrace{\norm{\ol\mY^{k+1} - \mX^*}}_{= \sqrt n \norm{\ol y^{k+1} - x^*}} + \norm{\nabla^r F(\mX^*)} \numberthis\label{eq:gradeint_y_upper_bound}
	\end{align*}
	where $x^* = \argmin_{x\in\R^d} f(x),~ \mX^* = \onevec (x^*)^\top$. It remains to estimate $\norm{\ol y^{k+1} - x^*}$.
	\begin{align*}
	\norm{\ol y^{k+1} - x^*}
	&\le \frac{\alpha^{k+1}}{A^{k+1}} \norm{\ol x^{k+1} - x^*} + \frac{A^{k}}{A^{k+1}}\norm{\ol u^{k+1} - x^*}
	\le \max\braces{\norm{\ol x^{k+1} - x^*}, \norm{\ol u^{k+1} - x^*}}
	\end{align*}
	
	By Lemma \ref{lem:inexact_agd_convergence} and strong convexity of $f$:
	\begin{align*}
	\E\norm{\ol x^{k+1} - x^*}^2 
	&\le \frac{2}{\mu}\cbraces{\E f(\ol x^{k+1}) - f(x^*)}
	\le \frac{\norm{\ol u^0 - x^*}^2}{A^{k+1}\mu} + \frac{\sum_{i=1}^{k+1} A^i}{A^{k+1}\mu} \cbraces{\frac{\sigma^2}{2Lr} + \delta}
	\end{align*}
	and therefore
	\begin{align*}
	\E\norm{\ol y^{k+1} - x^*}^2 
	&\le \max\braces{\frac{\norm{\ol u^0 - x^*}^2}{A^{k+1}\mu} + \frac{\sum_{i=1}^{k+1} A^i}{A^{k+1}\mu}\cbraces{\frac{\sigma^2}{2Lr} + \delta},~ \frac{\norm{\ol u^0 - x^*}^2}{1 + A^{k+1}\mu} + \frac{\sum_{i=1}^{k+1} A^i}{1 + A^{k+1}\mu}\cbraces{\frac{\sigma^2}{2Lr} + \delta}} \\
	&\le \frac{\norm{\ol u^0 - x^*}^2}{A^{k+1}\mu} + \frac{1}{\mu} \cbraces{1 + \sqrt\frac{L}{\mu}} \cbraces{\frac{\sigma^2}{2Lr} + \delta},
	\end{align*}
	where the last inequality holds by Lemma \ref{lem:Ak_properties}.
	
	Returning to \eqref{eq:gradeint_y_upper_bound}, we get
	\begin{align*}
	&\norm{\nabla^r F(\mY^{k+1})} \\
	&\qquad\le \Lworst\sqrt{\delta'} + \Lworst\sqrt{n} \cbraces{\frac{\norm{\ol u^0 - x^*}^2}{A^{k+1}\mu} + \frac{1}{\mu} \cbraces{1 + \sqrt\frac{L}{\mu}} \cbraces{\frac{\sigma^2}{2Lr} + \delta}}^{1/2} + \norm{\nabla^r F(\mX^*)} \\
	&\qquad\le \Lworst\sqrt{\delta'} + \Lworst\sqrt{n} \cbraces{\frac{L}{\mu} \norm{\ol u^0 - x^*}^2 \cbraces{1 + \sqrt\frac{\mu}{2L}}^{-2k} + \frac{2L^{1/2}}{\mu^{3/2}} \cbraces{\frac{\sigma^2}{2Lr} + \delta}}^{1/2} + \norm{\nabla^r F(\mX^*)} \\
	&\qquad\le \Lworst\sqrt{\delta'} + \Lworst\sqrt{n} \cbraces{\frac{L}{\mu} \norm{\ol u^0 - x^*}^2 + \frac{2L^{1/2}}{\mu^{3/2}} \cbraces{\frac{\sigma^2}{2Lr} + \delta}}^{1/2} + \norm{\nabla^r F(\mX^*)}.
	\end{align*}
	For distance to consensus of $\mV^{k+1}$, it holds
	\begin{align*}
	\E\norm{\mV^{k+1} - \aV^{k+1}} \le \sqrt{\delta'} + \frac{\alpha^{k+1}}{1 + A^k\mu + \mu} \E\norm{\nabla F\cbraces{\mY^{k+1}}}
	\end{align*}
	We estimate coefficient by $\E\norm{\nabla F\cbraces{\mY^{k+1}}}$ using the definition of $\alpha^{k+1}$.
	\begin{align*}
	&1 + A^k\mu = \frac{L(\alpha^{k+1})^2}{A^k + \alpha^{k+1}} \\
	&L(\alpha^{k+1})^2 - (1 + A^k\mu)\alpha^{k+1} - (1 + A^k\mu)A^k = 0 \\
	&\alpha^{k+1} = \frac{1+ A^k\mu + \sqrt{(1 + A^k\mu)^2 + 4LA^k(1 + A^k\mu)}}{2L} \\
	&\frac{\alpha^{k+1}}{1 + A^{k+1}\mu} \le \frac{\alpha^{k+1}}{1 + A^k\mu} = \frac{1}{2L} \cbraces{1 + \sqrt{1 + 4\frac{LA^k}{1 + A^k\mu}}} \\
	&\qquad\le \frac{1}{2L}\cbraces{\sqrt{\frac{L}{\mu}} + \sqrt{\frac{L}{\mu} + 4\frac{L}{\mu}}} \le \frac{2}{\sqrt{L\mu}}
	\end{align*}
	Returning to $\mV^{k+1}$, we get
	\begin{align*}
	&\E\norm{\mV^{k+1} - \aV^{k+1}} \\
	&\qquad\le \cbraces{\frac{2\Lmax}{\sqrt{L\mu}} + 1}\sqrt{\delta'} + \Lmax\sqrt{\frac{n}{L\mu}} \cbraces{\frac{L}{\mu} \norm{\ol u^0 - x^*}^2 + \frac{2L^{1/2}}{\mu^{3/2}} \cbraces{\frac{\sigma^2}{2Lr} + \delta}}^{1/2} + \frac{2\E\norm{\nabla^r F(\mX^*)}}{\sqrt{L\mu}} \\
	&\qquad\leq \cbraces{\frac{2\Lmax}{\sqrt{\Lav\muav}} + 1}\sqrt{\delta'} + \frac{2\Lmax}{\muav} \sqrt{n} \cbraces{\norm{\ol u^0 - x^*}^2 + \frac{2}{\sqrt{\Lav\muav}}\cbraces{\frac{\sigmaav^2}{4n\Lav r^2} + \delta}}^{1/2} + \frac{2nM_\xi}{\sqrt{\Lav\muav}} = \sqrt D,
	\end{align*}
	where in the last inequality we used $\norm{\nabla^r F(\mX^*)} \leq n M_\xi$.
\end{proof}

\subsection{Putting the proof of Theorem \ref{th:total_iterations_strongly_convex} together}\label{app:total_iterations_strongly_convex}

Let us show that choice of number of subroutine iterations $T_k = T$ yields
\begin{align*}
\E f(\ol x^k) - f(x^*) \le \frac{1}{A^k}\cbraces{\norm{\ol u^0 - x^*}^2 + \cbraces{\frac{\sigma^2}{2Lr} + \delta} \sum_{i=1}^k A^i}
\end{align*}
by induction. At $k=0$, we have $\norm{\mU^0 - \aU^0} = 0$ and by Lemma \ref{lem:inexact_agd_convergence} it holds
\begin{align*}
\E f(\ol x^1) - f(x^*) &\le \frac{1}{A^1}\cbraces{\norm{\ol u^0 - x^*}^2 + \cbraces{\frac{\sigma^2}{2Lr} + \delta} A^1}.
\end{align*}
For induction pass, assume that $\E\norm{\mU^j - \aU^j}^2\le \delta'$ for $j = 0,\ldots, k$. By Lemma \ref{lem:consensus_iters_strongly_convex}, if we set $T_k = T$, then $\E\norm{\mU^{k+1} - \aU^{k+1}}^2\le \delta'$. Applying Lemma \ref{lem:inexact_agd_convergence} again, we get
\begin{align*}
\E f(\ol x^k) - f(x^*) \le \frac{1}{A^k}\cbraces{\norm{\ol u^0 - x^*}^2 + \cbraces{\frac{\sigma^2}{2Lr} + \delta} \sum_{i=1}^k A^i}.
\end{align*}	

Next, we substitute a bound on $A^k$ from Lemma \ref{lem:Ak_properties} and get
	\begin{align*}
	\E f(\ol x^N) &- f(x^*) \\
	&~\le LR^2 \cbraces{1 + \frac{1}{2}\sqrt\frac{\mu}{L}}^{-2(N-1)} + \cbraces{1 + \sqrt\frac{L}{\mu}} \cbraces{\frac{\sigma^2}{2Lr} + \delta} \\
	&~ = 2\Lav R^2\cbraces{1 + \frac{1}{4}\sqrt\frac{\muav}{\Lav}}^{-2(N-1)} \hspace{-0.3cm} + \cbraces{1 + 2\sqrt\frac{\Lav}{\muav}} \cbraces{\frac{\sigma^2}{4\Lav r} + \delta}.
	\end{align*}
It remains to estimate the number of iterations required for $\eps$-accuracy. In order to satisfy
\begin{align*}
2\Lav\norm{\ol u^0 - x^*}^2\cbraces{1 + \frac{1}{4}\sqrt\frac{\muav}{\Lav}}^{-2(N-1)} &\le \frac{\eps}{2}, \\
\cbraces{1 + 2\sqrt\frac{\Lav}{\muav}} \cbraces{\frac{\sigma^2}{4\Lav r} + \delta} &\le \frac{\eps}{2},
\end{align*}
it is sufficient to choose $\delta' = \dfrac{n\eps}{32} \dfrac{\muav^{3/2}}{\Lav^{1/2} \Lmax^2},~ r = \dfrac{2\sigma^2}{\eps\sqrt{\Lav\muav}}$ and
\begin{align*}
N = 3\sqrt{\frac{\Lav}{\muav}} \log\cbraces{\frac{4\Lav\norm{\ol u^0 - x^*}^2}{\eps}}.
\end{align*}
\pd{Finally, the total number of stochastic oracle calls per node equals
	\begin{align*}
	N_{orcl} = N\cdot r = \dfrac{6\sigmaav^2}{n\muav\eps} \log\cbraces{\frac{4\Lav\norm{\ol u^0 - x^*}^2}{\eps}}.
	\end{align*}
	Further, the total number of communications is
	\begin{align*}
	N_{\text{comm}} 
	&= N\cdot T = 3\sqrt\frac{\Lav}{\muav} \log\cbraces{\frac{4\Lav\norm{\ol u^0 - x^*}^2}{\eps}}\cdot \kappa \cdot \log\frac{D}{\delta'} \\
	&= O\left(\sqrt{\frac{\Lav}{\muav}} \kappa \cdot \log\cbraces{\frac{4\Lav\norm{\ol u^0 - x^*}^2}{\eps}} \log\frac{D}{\delta'}\right),
	\end{align*}
	where $\kappa = \frac{\tau}{2\lambda}$ if the communication network is time-varying and $\kappa = \sqrt{\chi}$ if the communication network is fixed.}


\end{document}